\documentclass[article,11pt]{amsart}
\usepackage[top=25mm,bottom=25mm,left=25mm,right=25mm]{geometry}
\usepackage{graphicx}

\newtheorem{lemma}{Lemma}

\newtheorem{proposition}{Proposition}
\newtheorem{theorem}{Theorem}
\newtheorem{corollary}{Corollary}

\newtheorem{example}{Example}

\DeclareMathOperator{\spann}{span}
\DeclareMathOperator{\diam}{diam}

\DeclareMathOperator{\loc}{loc}
\DeclareMathOperator{\comp}{c}
\DeclareMathOperator{\bound}{b}

\newcommand{\cD}{\mathcal{D}}
\newcommand{\cQ}{\mathcal{Q}}
\newcommand{\cA}{\mathcal{A}}

\newcommand{\cV}{\mathcal{V}}
\newcommand{\cK}{\mathcal{K}}

\newcommand{\cF}{\mathcal{F}}
\newcommand{\cE}{\mathcal{E}}

\newcommand{\R}{\mathbf{R}}
\newcommand{\1}{\mathbf{1}}

\newcommand{\pr}{\mathbf{P}}
\newcommand{\ex}{\mathbf{E}}

\begin{document}
\title[Spectral gap for fractional Schr\"odinger operators]{Spectral gap lower bound for the one-dimensional fractional Schr\"odinger operator in the interval}
\author{ Kamil Kaleta}
\address{Kamil Kaleta, Institute of Mathematics and Computer Science \\ Wroc{\l}aw University of Technology 
\\ Wyb. Wyspia{\'n}skiego 27, 50-370 Wroc{\l}aw, Poland}
\email{kamil.kaleta@pwr.wroc.pl}
\thanks{Research supported by the Polish Ministry of Science and Higher Education grant no. N N201 527338}

\begin{abstract}
We prove the uniform lower bound for the difference $\lambda_2 - \lambda_1$ between first two eigenvalues of the fractional Schr\"odinger operator, which is related to the Feynman-Kac semigroup of the symmetric $\alpha$-stable process killed upon leaving open interval $(a,b) \in \R $ with symmetric differentiable single-well potential $V$ in the interval $(a,b)$, $\alpha \in (1,2)$. "Uniform" means that the positive constant appearing in our estimate $\lambda_2 - \lambda_1 \geq C_{\alpha} (b-a)^{-\alpha}$ is independent of the potential $V$. In general case of $\alpha \in (0,2)$, we also find uniform lower bound for the difference $\lambda_{*} - \lambda_1$, where $\lambda_{*}$ denotes the smallest eigenvalue related to the antisymmetric eigenfunction $\varphi_{*}$. We discuss some properties of the corresponding ground state eigenfunction $\varphi_1$. In particular, we show that it is symmetric and unimodal in the interval $(a,b)$.
\end{abstract}

\maketitle

\section{Introduction and statement of results}

The main purpose of this paper is to prove an uniform lower bound for the spectral gap of the fractional Schr\"odinger operator with symmetric differentiable single-well potential on a bounded interval of the real line. Such an operator is related to the Feynman-Kac semigroup of the killed symmetric $\alpha$-stable process. To obtain this bound we study some basic properties of the first and second eigenfunction of this operator such as monotonicity and differentiability. Our work is motivated by the classical results obtained in \cite{bib:AB1, bib:AB2}, where the similar spectral problem was studied for the classical Schr\"odinger operator with the symmetric single-well potential on the interval.

Before we describe our results in details let us recall the basic definitions and facts. Let $(X_t)_{t \geq 0}$ be the symmetric $\alpha$-stable process of order $\alpha \in (0,2)$ in $\R$. This process is a Markov process with stationary independent increments and the characteristic function of the form $\ex^0[\exp{(i \xi X_t)} = \exp (-t|\xi|^{\alpha})$, $\xi \in \R$, $t>0$. As usual, $\ex^x$ denotes the expected value of the process starting at $x \in \R$. Let $(a,b) \subset \R$, $-\infty<a<b<\infty$, be an open interval and let $\tau_{(a,b)} = \inf\left\{ t \geq 0: X_t \notin (a,b)\right\}$ be the first exit time of $X_t$ from $(a,b)$. 

The Feynman-Kac semigroup $(T_t)_{t \geq 0}$ for the symmetric $\alpha$-stable process $X_t$ killed upon leaving $(a,b)$ and for potential $V \in \cV^{\alpha}((a,b))$ is defined as
\begin{align}
\label{def:FKS}
T_t f(x) = \ex^x\left[ \exp\left(-\int_0^t V(X_s)ds\right)f(X_t); \tau_{(a,b)} >t\right], \ \ \ \ f \in L^2((a,b)), \ t>0, \ x \in (a,b),
\end{align}
where $\cV^{\alpha}((a,b))$ is a class of functions $V:(a,b) \rightarrow \R$ specified by the following three conditions:
\begin{itemize}
\item[(i)] integrability: $V$ extended to $\R$ by putting $0$ outside $(a,b)$ is in the Kato class $\cK^{\alpha}$ for the symmetric $\alpha$-stable process $X_t$.
Formal definition of $\cK^{\alpha}$ is given in Section 2.
\item[(ii)] symmetry: $V(x) = V(b+a-x)$ for $x \in (a,b)$.
\item[(iii)] differentiability and monotonicity: $V^{'}$ exists in $(a,b)$ and $V^{'}(x) \leq 0$ for $x \in (a,(a+b)/2)$.
\end{itemize}
\noindent
In the above definition we use the convention that potentials $V$ are defined on the interval $(a,b)$. However, very often, it will be useful to see the potential $V$ as a function extended to whole real line $\R$ by putting $0$ outside $(a,b)$. Notice also that the assumption (i) is an integrability condition under which the above Feynman-Kac semigroup is well defined (see \cite{bib:BB1, bib:BB2}).
Moreover, it immediately follows from the assumptions (ii) and (iii) that $V$ is a symmetric function, which is continuous, bounded from below, nonincreasing in $(a,(a+b)/2)$ and nondecreasing in $((a+b)/2,b)$. In the terminology of \cite{bib:AB1} potentials with a such monotonicity property are called symmetric single-well. We refer to the potentials from the class $\cV^{\alpha}((a,b))$ as the \emph{symmetric differentiable single-well potentials} on the interval $(a,b)$.

The operators $T_t$ are symmetric and form a strongly continuous semigroup on $L^2((a,b))$. The infinitesimal generator $L$ of the semigroup $(T_t)_{t \geq 0}$ is defined formally by
$$
L f = \lim_{t \downarrow 0} \frac{T_tf - f}{t}
$$
for such $f \in L^2((a,b))$ for which this limit exists in $L^2((a,b))$. The set of all such functions is called the domain of $L$ and is denoted by $\cD(L)$. Similarly, we define $Lf(x) = \lim_{t \downarrow 0} (T_t f(x) - f(x))/t$ for any $f \in C((a,b))$ and $x \in (a,b)$ for which the limit exists. It is easy to show that if $f \in C^{\infty}_{\comp}((a,b))$, then $L f(x)$ is well defined and 
$$
L f(x) = -(-\Delta)^{\alpha/2}f(x) - V(x)f(x), \ \ \ \ \ \ \ \ \ x \in (a,b),
$$ 
where $-(-\Delta)^{\alpha/2}$ is the fractional Laplacian of order $\alpha$ (see \cite[p. 11-13]{bib:BBKRSV}). Note that the operator $(-\Delta)^{\alpha/2}$ will not be used in any essential way in proving our eigenvalue gap estimates. We just want to emphasize the connection between the operators $L$ and $(-\Delta)^{\alpha/2}$. By this correspondence, we refer to the operator $-L$ as the fractional Schr\"odinger operator with the potential $V$ on interval $(a,b)$. 

In recent years Schr{\"o}dinger operators based on non-local pseudodifferential operators have been intensively studied. One of the most well known result is the so-called Hardy-Lieb-Thirring inequality obtained in 2008 by R. Frank, E. Lieb and R. Seiringer \cite{bib:FLS1}, which is connected with the problem of the stability of matter \cite{bib:LS}. In the last 30 years many results concerning the fractional Schr{\"o}dinger operators and relativistic Schr{\"o}dinger operators have been obtained \cite{bib:CMS, bib:HIL, bib:Z, bib:CS, bib:CS1, bib:CS2, bib:BB1, bib:BB2, bib:KS, bib:KaKu, bib:KL, bib:LMa}. These results are about functional integration, structure of spectrum, conditional gauge theorem, estimates of eigenfunctions and intrinsic ultracontractivity. Most of these results are obtained by using the probabilistic and potential theoretic methods. 

The fact that the interval $(a,b)$ is bounded implies that for any $t>0$ the operator $T_t$ maps $L^2((a,b))$ into $L^{\infty}((a,b))$. It follows from the general theory of semigroups that there exists an orthonormal basis of egenfunctions $\left\{\varphi_n\right\}$ in $L^2((a,b))$ and corresponding sequence of eigenvalues 
$$
\lambda_1 < \lambda_2 \leq \lambda_3 \leq ... \to \infty
$$
satisfying
\begin{align*}
T_t \ \varphi_n & = e^{- \lambda_n t} \ \varphi_n, \\
L \varphi_n & = - \lambda_n \ \varphi_n, \ \ \ \ \ \ \ \ \ n \geq 1.
\end{align*}
We may and do choose the basis $\left\{\varphi_n\right\}$ so that $\varphi_n$ is either symmetric (i.e., $\varphi_n(x)= \varphi_n(a+b-x)$ for $x \in (a,b)$) or antisymmetric (i.e., $\varphi_n(x)= - \varphi_n(a+b-x)$ for $x \in (a,b)$). Moreover, each eigenfunction $\varphi_n$ is continuous and  bounded and all $\lambda_n$ have finite multiplicities. Additionally, $\lambda_1$ is simple and the corresponding eigenfunction, the so-called ground state eigenfunction, can be assumed to be strictly positive on $(a,b)$. Our main concern in this paper is the difference $\lambda_2 - \lambda_1 >0$, which is called the spectral gap. All above defined objects depend on the stability parameter $\alpha \in (0,2)$, the interval $(a,b)$ and the potential $V \in \cV^{\alpha}((a,b))$. However, for more simplicity, we prefer to omit this dependence in our notation. 

Let us point out that $(-\Delta)^{\alpha/2} \varphi_n(x)$ is well defined for all $x \in (a,b)$, $n \geq 1$, and we have 
$$
-L \varphi_n (x) = (-\Delta)^{\alpha/2}\varphi_n(x) + V(x) \varphi_n(x)  =  \lambda_n \varphi_n(x), \ \ \ \ \ \ \ x \in (a,b).
$$
By the fact that each $\varphi_n$ may be extended to whole $\R$ by putting $0$ outside $(a,b)$, discussed in this paper spectral problem can be seen as the eigenproblem for the fractional Schr\"odinger operator on the interval $(a,b)$ with Dirichlet exterior conditions (that is, outside the interval $(a,b)$). 

Mentioned above spectral problem has been widely studied for classical Schr\"odinger operators $-\Delta + V$ acting on $L^2(D)$ with Dirichlet boundary conditions, where $D$ is a bounded domain in $\R^d$, $d \geq 1$. Motivated by problems in mathematical physics concerning the behaviuor of free Boson gases, M. van den Berg \cite{bib:Be} made the following conjecture. If $D \subset \R^d$ is convex such that $\diam(D) < \infty$ and $V$ is nonnegative convex potential in $D$, then 
\begin{align}
\label{eq:clsp}
\lambda_{2,D}^V - \lambda_{1,D}^V > \frac{3 \pi^2}{\diam(D)^2},
\end{align}
where $\lambda_{1,D}^V$ and $\lambda_{2,D}^V$ are the first and the second eigenvalue of $-\Delta + V$ acting on $L^2(D)$ with Dirichlet boundary conditions. This problem has been widely studied by many authors \cite{bib:SWYY, bib:YZ, bib:Da2, bib:BM, bib:Sm, bib:Li, bib:AC}. In particular, the strict inequality \eqref{eq:clsp} was obtained in 2010 by B. Andrews and J. Clutterbuck \cite{bib:AC}. Let us point out that the this conjecture for intervals on the real line and for arbitrary nonnegative convex potentials was proved earlier by R. Lavine \cite{bib:Lav}.

The classical result which is the most related to our one was obtained by M. Ashbaugh and R. Benguria \cite{bib:AB1,bib:AB2}. They studied this problem in one dimension when a domain $D$ is just a bounded interval and showed the inequality \eqref{eq:clsp} for the different class of \emph{symmetric single-well potentials} $V$ that are integrable in $D \subset \R$. This class includes the symmetric convex potentials, as well as a variety of nonconvex (but symmetric) potentials. 

The problem of eigenvalue estimates and the spectral gap lower bound has also been studied for fractional Laplacian $-(-\Delta)^{\alpha/2}$ (i.e. $V \equiv 0$) on bounded domains of $\R^d$ with Dirichlet exterior conditions \cite{bib:CS4, bib:DBl, bib:BK2, bib:BK3, bib:BK4, bib:DK, bib:Kw, bib:DBlMH}. In one dimension (when $D$ is an interval) eigenvalue gaps estimates follow from results in \cite{bib:BK1} ($\alpha = 1$) and \cite{bib:CS3} ($\alpha > 1$). Moreover, the recent papers \cite{bib:KKMS} ($\alpha = 1$), \cite{bib:Kw1} ($\alpha \in (0,2)$) contain new asymptotic formulas for eigenvalues, which can be used to find the numerical bounds for eigenvalue gaps.

Now we formulate the main results of this chapter. The following variational formula for the gaps between eigenvalues of $L$ is rather standard fact. In particular, this formula gives an integral representation for the spectral gap $\lambda_2 - \lambda_1$ and will be a starting point of our proofs. In fact, it is a fractional extension of the classical variational formula for the eigenvalue gaps of the classical Schr\"odinger operators which can be found for example in \cite{bib:Sm}. For the version of a such formula for the fractional Laplacian (i.e. $V \equiv 0$) we refer to \cite{bib:DK}. Denote by $L^2((a,b),\varphi_1^2)$ the space of square-integrable functions on the interval $(a,b)$ with measure $\varphi_1^2(x)dx$.

\begin{proposition}
\label{prop:var}
Assume that $\alpha \in (0,2)$. Let $V \in \cV^{\alpha}((a ,b))$, $-\infty<a<b<\infty$. Then for every $n \geq 2$ we have
\begin{align}
\label{eq:var}
\lambda_n - \lambda_1 = \inf_{f \in \cF_n \cap C_{\bound}((a,b))} \frac{\cA_{-\alpha}}{2} \int_{a}^b \int_{a}^b \frac{(f(x)-f(y))^2}{|x-y|^{1+\alpha}} \varphi_1(x) \varphi_1(y) dx dy, 
\end{align}
where
$$
\cF_n = \left\{f \in L^2((a,b),\varphi_1^2): \int_a^b f^2(x) \varphi_1^2(x)dx = 1, \int_a^b f(x) \varphi_1(x) \varphi_i(x) dx = 0, \ 1 \leq i\leq n-1\right\}
$$
and
\begin{align}
\label{eq:constant}
\cA_{\gamma} = \frac{\Gamma((1-\gamma)/2)}{2^{\gamma}\sqrt{\pi}|\Gamma(\gamma/2)|} .
\end{align}
\noindent
Moreover, the infimum in \eqref{eq:var} is achieved for $f = \varphi_n / \varphi_1$. 
\end{proposition}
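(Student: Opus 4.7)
The plan is to combine the min-max principle with a ``ground state transformation'' that rewrites the Dirichlet form of $-L$ into the $\varphi_1$-weighted form appearing in \eqref{eq:var}. The quadratic form associated with $-L$ on $L^2((a,b))$ is
\begin{align*}
\mathcal{E}^V(u,u) = \frac{\cA_{-\alpha}}{2} \int_\R \int_\R \frac{(u(x)-u(y))^2}{|x-y|^{1+\alpha}} \, dx\, dy + \int_a^b V(x) u(x)^2 \, dx,
\end{align*}
acting on functions $u \in L^2((a,b))$ extended by $0$ outside $(a,b)$, and the min-max principle gives
\begin{align*}
\lambda_n = \inf\bigl\{ \mathcal{E}^V(u,u) : \|u\|_2 = 1,\ \langle u, \varphi_i \rangle = 0,\ 1\le i\le n-1 \bigr\}.
\end{align*}
Writing $u = f\varphi_1$, the $L^2$-normalization becomes $\int_a^b f^2 \varphi_1^2\, dx = 1$ and the orthogonality constraints become $\int_a^b f\, \varphi_1 \varphi_i\, dx = 0$, which are precisely the conditions defining $\cF_n$.

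The core step is to establish, for $u = f\varphi_1$ with $f$ in a suitable dense subclass of $\cF_n$, the ``Hardy-type'' identity
\begin{align*}
\mathcal{E}^V(u,u) = \lambda_1 \|u\|_2^2 + \frac{\cA_{-\alpha}}{2} \int_a^b \int_a^b \frac{(f(x)-f(y))^2}{|x-y|^{1+\alpha}} \varphi_1(x) \varphi_1(y) \, dx\, dy.
\end{align*}
This rests on the pointwise algebraic decomposition
\begin{align*}
(f(x)\varphi_1(x) - f(y)\varphi_1(y))^2 = (f(x)-f(y))^2 \varphi_1(x)\varphi_1(y) + (\varphi_1(x)-\varphi_1(y))\bigl(f(x)^2\varphi_1(x) - f(y)^2\varphi_1(y)\bigr),
\end{align*}
divided by $|x-y|^{1+\alpha}$ and integrated. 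Splitting $\R^2$ into $(a,b)^2$ and the two interior-exterior pieces, and using $x \leftrightarrow y$ symmetry to collapse the cross-term into $2\int_a^b f(x)^2 \varphi_1(x) \int_a^b (\varphi_1(x)-\varphi_1(y))|x-y|^{-1-\alpha}\,dy\,dx$, I would replace the inner integral via the eigenvalue equation $(-\Delta)^{\alpha/2}\varphi_1(x) = (\lambda_1 - V(x))\varphi_1(x)$, valid for every $x \in (a,b)$ since $\varphi_1$ vanishes outside $(a,b)$. The ``boundary'' contributions, involving the exterior kernel $\int_{(a,b)^{\comp}} |x-y|^{-1-\alpha}\,dy$, then appear identically on both sides and cancel, leaving the displayed identity. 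Feeding it into the min-max formula produces \eqref{eq:var}, and the infimum is manifestly attained at $u = \varphi_n$, i.e.\ at $f = \varphi_n/\varphi_1$.

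The main obstacle is rigor: $\varphi_1$ vanishes at $\{a,b\}$ of order $\dist(x,\{a,b\})^{\alpha/2}$, so the ratio $\varphi_n/\varphi_1$ need not be bounded and the substitution $u = f\varphi_1$ is not automatically admissible for every $f \in \cF_n$. I would first prove the identity for $f \in C^{\infty}_{\comp}((a,b))$, where all manipulations are clean and $(-\Delta)^{\alpha/2}\varphi_1$ is well-defined pointwise by the remarks in the introduction, and then extend to the whole of $\cF_n$ by approximation: truncate $f$ and multiply by smooth cutoffs supported in compact subsets of $(a,b)$, using continuity and boundedness of the $\varphi_n$ together with quantitative boundary decay of $\varphi_1$ to bound the error terms, and pass to the limit on both sides. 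This in particular shows that $f = \varphi_n/\varphi_1 \in \cF_n$, realizes the weighted double integral with finite value $\lambda_n - \lambda_1$, and is a minimizer.
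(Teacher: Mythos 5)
Your proposal is correct and reaches the same conclusion, but by a genuinely different route. The paper performs an $h$-transform on the semigroup: it defines $\widetilde T_t f = e^{\lambda_1 t}\varphi_1^{-1}T_t(\varphi_1 f)$, observes that $(\widetilde T_t)$ is a symmetric Markov semigroup on $L^2((a,b),\varphi_1^2\,dx)$ with eigenpairs $(\varphi_n/\varphi_1, e^{-(\lambda_n-\lambda_1)t})$, invokes the standard variational formula for the Dirichlet form of that semigroup, and then \emph{cites} \cite[Theorems~2.6 and~2.8]{bib:CFTYZ} for the Beurling--Deny representation of the transformed form with jump kernel $\cA_{-\alpha}\varphi_1(x)\varphi_1(y)|x-y|^{-1-\alpha}$ (after first shifting $V$ by a constant so the semigroup is sub-Markovian). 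You instead stay on the form side: you apply min-max to the Dirichlet form of $-L$ and derive the ground-state-transformed identity by hand, via the pointwise decomposition $(f(x)\varphi_1(x)-f(y)\varphi_1(y))^2 = (f(x)-f(y))^2\varphi_1(x)\varphi_1(y) + (\varphi_1(x)-\varphi_1(y))(f(x)^2\varphi_1(x)-f(y)^2\varphi_1(y))$, the symmetry-collapse of the cross term, and the pointwise eigenvalue equation for $\varphi_1$. Your algebraic identity checks out, and the cancellation of the exterior-kernel (``killing'') contribution against the boundary term hidden in $(-\Delta)^{\alpha/2}\varphi_1$ is exactly right. The trade-off: your version is more self-contained and exposes the mechanism that the paper delegates to \cite{bib:CFTYZ}, but it shifts the technical burden onto the density/approximation step, which you correctly flag as the nontrivial part. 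The semigroup route sidesteps that because the transformed Dirichlet form has $\cD(\widetilde\cE)$ built in and the variational principle applies over all of $L^2(\varphi_1^2\,dx)$ without needing to certify that $\varphi_n/\varphi_1$ is reachable by $C_c^\infty$ test functions. To complete your proof rigorously one would want to either (i) prove the boundary decay $\varphi_n\asymp\varphi_1$ and show the transformed form is closed on a domain containing the ratios, or (ii) fall back on a closure/Fatou argument that the weighted double integral is lower semicontinuous along the truncations; as written, ``bound the error terms and pass to the limit'' is a plausible sketch rather than a finished argument. The one thing the paper's reduction buys that you do not need is the sub-Markovianity of $(T_t)$; min-max is insensitive to the sign of $V$, so your omission of that step is fine.
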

\noindent
The proof of the above proposition is given in Section 3. Let us also point out that Proposition \ref{prop:var} is a consequence of the standard variational formula for eigenvalues and a special case of \cite[Theorems 2.6 and 2.8]{bib:CFTYZ}.

Our first main results are the following theorems concerning the properties of eigenfunctions. The first one is about the monotonicity of $\varphi_1$. We extend some earlier ideas from \cite{bib:BK3, bib:BKM} and \cite{bib:BM}, and prove that the ground state eigenfunction is unimodal in the interval $(a,b)$. This property will be our main argument in proving eigenvalue gap estimates.

\begin{theorem}
\label{th:gsp}
Let $\alpha \in (0,2)$. Let $V \in \cV^{\alpha}((a,b))$, $-\infty<a<b<\infty$. Then $\varphi_1$ is symmetric and unimodal in $(a,b)$, i.e., $\varphi_1$ is nondecreasing in $(a,(a+b)/2)$ and nonincreasing in $((a+b)/2,b)$.
\end{theorem}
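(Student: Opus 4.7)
The plan splits into two parts: symmetry of $\varphi_1$ will follow from the simplicity of $\lambda_1$, while unimodality will follow from a symmetric decreasing rearrangement applied to the Rayleigh--Ritz characterization of $\lambda_1$. For symmetry, let $R(x) = a+b-x$ denote reflection about the midpoint $m = (a+b)/2$. By hypothesis (ii) the potential $V$ is $R$-invariant, and both the fractional Laplacian and the Dirichlet killing on $(a,b)$ are reflection invariant, so $T_t$ commutes with the map $f \mapsto f\circ R$. Hence $\varphi_1 \circ R$ is a strictly positive $L^2$-eigenfunction of $L$ at $\lambda_1$; simplicity of $\lambda_1$ together with the $L^2$-normalization forces $\varphi_1 \circ R = \varphi_1$.

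For unimodality I would invoke the standard Rayleigh--Ritz formula
\[
\lambda_1 = \inf_{\|f\|_2 = 1} \cE(f), \qquad \cE(f) = \frac{\cA_{-\alpha}}{2}\int_{\R^2}\frac{(\bar f(x)-\bar f(y))^2}{|x-y|^{1+\alpha}}\,dx\,dy + \int_a^b V f^2\,dx,
\]
where the infimum runs over $f\in L^2((a,b))$ with $\|f\|_2 = 1$, and $\bar f$ denotes the extension of $f$ to $\R$ by zero; simplicity of $\lambda_1$ guarantees that the positive normalized minimizer is unique. Let $\varphi_1^*$ denote the symmetric decreasing rearrangement of $\varphi_1$ about $m$ inside $(a,b)$. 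It is equimeasurable with $\varphi_1$ (so $\|\varphi_1^*\|_2 = 1$), symmetric and unimodal on $(a,b)$ by construction, and strictly positive on $(a,b)$ since $\varphi_1$ is. The key claim will be $\cE(\varphi_1^*)\le \cE(\varphi_1) = \lambda_1$: combined with the variational minimality this forces $\cE(\varphi_1^*) = \lambda_1$, and then uniqueness of the positive normalized minimizer gives $\varphi_1 = \varphi_1^*$, which is the statement of the theorem.

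The inequality $\cE(\varphi_1^*)\le \cE(\varphi_1)$ splits into the potential and kinetic parts. For the potential part, layer-cake gives $\int_a^b V\varphi_1^2\,dx = \int_0^\infty \int_{\{\varphi_1^2 > t\}} V\,dx\,dt$; since $V$ is single-well with minimum at $m$, for any measurable $A\subset(a,b)$ of prescribed Lebesgue measure the integral $\int_A V\,dx$ is minimized by the interval centered at $m$, i.e.\ by $\{(\varphi_1^*)^2 > t\}$, and integrating in $t$ yields $\int V(\varphi_1^*)^2\,dx \le \int V \varphi_1^2\,dx$. For the kinetic part I would use the nonlocal Polya--Szeg\H{o} inequality for the fractional seminorm on $\R$: apply the Riesz rearrangement inequality to the truncated kernel $\mathbf{1}_{|z|>\varepsilon}|z|^{-1-\alpha}$ together with the expansion $(\bar f(x)-\bar f(y))^2 = \bar f(x)^2 + \bar f(y)^2 - 2\bar f(x)\bar f(y)$ (the first two terms are preserved by equimeasurability, the third is increased by Riesz since $\bar f^*$ is the symmetric decreasing rearrangement of $\bar f$ about $m$), then pass to the limit $\varepsilon \downarrow 0$ by monotone convergence.

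I expect the kinetic-energy rearrangement step to be the main technical obstacle: the bare kernel $|z|^{-1-\alpha}$ is not integrable at the origin, so the quadratic expansion cannot be split and integrated term-by-term, and one must proceed through truncation (or invoke a ready-made nonlocal Polya--Szeg\H{o} inequality, or a subordination argument reducing to Gaussian kernels for which the classical rearrangement inequality is immediate). Once this is in hand, the remainder is a routine combination of the variational principle, equimeasurability, and Hardy--Littlewood-type rearrangement inequalities tailored to the single-well profile of $V$.
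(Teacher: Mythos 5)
Your handling of the symmetry part coincides with the paper's. For unimodality you take a genuinely different route. The paper proves, for each fixed $t>0$, that $g_t(x) = \ex^x[\exp(-\int_0^t V(X_s)\,ds);\, t < \tau_{(a,b)}]$ is itself unimodal in $x$: after approximating $V$ by $V\wedge k$, it discretizes the Feynman--Kac functional along dyadic times, subordinates the stable transition density to the Gaussian one via $X_t = B_{\eta_t}$, and proves unimodality of the resulting finite products of Gaussian kernels by induction on the number of factors with repeated integration by parts (Lemmas~1 and~2); $\varphi_1$ then inherits the property as the uniform limit $\lim_{t\to\infty}C_V e^{\lambda_1 t}g_t$. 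You instead work at the level of the Rayleigh quotient and combine a fractional P\'olya--Szeg\H{o} inequality with a bathtub-principle estimate for the $\int V\varphi_1^2$ term, finishing by uniqueness of the positive normalized minimizer. Both arguments ultimately rest on Riesz rearrangement against Gaussian kernels (the paper's subordination step is precisely the ``reduce to Gaussians'' device you mention in passing); the paper's route proves the stronger pointwise statement that $T_t\1_{(a,b)}$ is unimodal for each $t$, at the cost of the Trotter-type discretization machinery, whereas yours is shorter once the fractional P\'olya--Szeg\H{o} inequality is in hand --- but that inequality is exactly where all the work hides.

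There is one concrete flaw in the specific truncation you propose for the kinetic step. The kernel $\1_{\{|z|>\varepsilon\}}\,|z|^{-1-\alpha}$ is \emph{not} symmetric decreasing (it vanishes on a neighborhood of the origin), and the Riesz rearrangement inequality compares $\int\!\!\int f\,g\,h$ with $\int\!\!\int f^{*}\,g^{*}\,h^{*}$; since $g^{*}\ne g$ for your punctured kernel, you cannot conclude $\int\!\!\int f\,g\,f\le\int\!\!\int f^{*}\,g\,f^{*}$ from it. The fix is to cap the kernel from above rather than cut it out near zero: with $g_M(z) = |z|^{-1-\alpha}\wedge M$, which is symmetric decreasing and integrable, the expansion $\int\!\!\int(\bar f(x)-\bar f(y))^2 g_M(x-y)\,dx\,dy = 2\|g_M\|_{L^1}\int \bar f^{\,2} - 2\int\!\!\int \bar f(x)g_M(x-y)\bar f(y)\,dx\,dy$ is legitimate, Riesz applies to the cross term because $g_M^{*}=g_M$, and monotone convergence as $M\uparrow\infty$ gives the P\'olya--Szeg\H{o} inequality. (The subordination alternative you mention also works and is essentially the paper's Lemma~1.) The bathtub-principle argument for the potential term is correct as stated: $V$ is continuous, bounded below, symmetric and single-well, so its sublevel sets are intervals centered at the midpoint, and the layer-cake representation together with Fubini (justified since $\inf V>-\infty$) yields $\int V(\varphi_1^{*})^2\le\int V\varphi_1^2$.
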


\begin{theorem}
\label{th:gsp1}
Let $\alpha \in (1,2)$. Let $V \in \cV^{\alpha}((a,b))$, $-\infty<a<b<\infty$. Then for all $n \geq 1$ the derivative of $\varphi_n$ exists in $(a,b)$. Moreover, if $[c,d] \subset (a,b)$, then there exists a constant $C_{V,n,\alpha,a,b,c,d}$ such that for all $x \in [c,d]$ we have
$$
\left|\frac{d}{dx} \varphi_n(x)\right| \leq C_{V,n,\alpha,a,b,c,d} \left\|\varphi_n\right\|_{\infty}.
$$
\end{theorem}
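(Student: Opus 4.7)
The plan is to derive a local integral representation of $\varphi_n$ on a small interval inside $(a,b)$ by means of Dynkin's formula for the symmetric $\alpha$-stable process, and then differentiate this representation termwise. Fix $x \in [c,d]$ and set $r := \tfrac{1}{2}\min(c-a,\, b-d)$, so that $B := (x-r, x+r) \subset (a,b)$. Since $-L\varphi_n = \lambda_n \varphi_n$ pointwise on $(a,b)$, the identity $(-\Delta)^{\alpha/2}\varphi_n = (\lambda_n - V)\varphi_n$ holds pointwise there; applying Dynkin's formula for the unperturbed stable process killed on exit from $B$ therefore gives, for all $z \in B$,
\begin{equation*}
\varphi_n(z) \;=\; \int_{B^c} P_B(z,y)\,\varphi_n(y)\,dy \;+\; \int_B G_B(z,y)\,(\lambda_n - V(y))\,\varphi_n(y)\,dy,
\end{equation*}
where $P_B$ and $G_B$ are the Poisson kernel and the Green function of the $\alpha$-stable process killed on leaving $B$. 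Both are known in closed form on an interval (Blumenthal--Getoor--Ray): $P_B(z,y)$ is smooth for $z \in B$, $y \in B^c$, and $G_B(z,y) \asymp |z-y|^{\alpha-1}$ near the diagonal, with $|\partial_z G_B(z,y)| \leq C|z-y|^{\alpha-2}$.

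For the first integral, the explicit formula yields $|\partial_z P_B(z,y)| \leq C_r\,((y-x)^2 - r^2)^{-\alpha/2}$ for $z$ in a compact subset of $B$, and this bound is integrable in $y$ on $(a,b) \setminus B$ because $\alpha/2 < 1$. Since $\varphi_n$ is bounded and vanishes off $(a,b)$, dominated convergence justifies differentiation under the integral sign, and the resulting derivative evaluated at $z = x$ is bounded in absolute value by $C_1(\alpha,a,b,c,d)\,\|\varphi_n\|_\infty$.

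The Green function integral is the heart of the argument. Continuity of $V$ on $(a,b)$ (a consequence of differentiability) makes $V$ bounded on $\overline{B} \subset\subset (a,b)$, so $(\lambda_n - V(y))\varphi_n(y)$ is controlled by $(\lambda_n + \|V\|_{L^\infty(B)})\|\varphi_n\|_\infty$. To differentiate under the integral one splits the domain into $\{|y-z| > 2|h|\}$ and its complement. On the former, the mean value theorem together with the diagonal estimate on $\partial_z G_B$ produces the $h$-independent majorant $C|y-z|^{\alpha-2}$, integrable because $\alpha - 2 > -1$. On the latter, the direct bound $|G_B(z+h,y) - G_B(z,y)| \leq C|h|^{\alpha-1}$ (from the diagonal behavior of $G_B$) forces the contribution to the difference quotient to be $O(|h|^{\alpha-1}) \to 0$. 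This is the sole step in which the hypothesis $\alpha \in (1,2)$ is essentially used.

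Assembling the two termwise derivatives at $z = x$ yields the existence of $\varphi_n'(x)$ together with the estimate $|\varphi_n'(x)| \leq C_{V,n,\alpha,a,b,c,d}\|\varphi_n\|_\infty$ uniformly in $x \in [c,d]$. The main technical obstacle is the dominated-convergence analysis of the Green function integral near the diagonal: it depends simultaneously on the local integrability of $|z-y|^{\alpha-2}$ and on the $O(|h|^{\alpha-1})$ control on the diagonal strip, both of which fail at $\alpha = 1$ and both of which demand the full force of $\alpha > 1$.
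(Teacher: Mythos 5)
Your proposal is correct in its essentials but follows a genuinely different route than the paper. You localize: fix a small ball $B = (x-r,x+r)\subset\subset (a,b)$, use Dynkin's formula (equivalently, the pointwise identity $(-\Delta)^{\alpha/2}\varphi_n=(\lambda_n-V)\varphi_n$ on $B$) to write $\varphi_n$ as a Poisson-kernel integral over $B^c$ plus a Green-potential integral over $B$ against $(\lambda_n - V)\varphi_n$, and differentiate each term. The paper instead derives a \emph{global} representation on the whole interval: starting from the semigroup eigenequation $T_t\varphi_n = e^{-\lambda_n t}\varphi_n$ it integrates in $t$ to obtain $\varphi_n = (\lambda_n+\eta)\,G^{V_\eta}_{(a,b)}\varphi_n$ after a vertical shift $V_\eta = V+\eta>0$ ensuring gaugeability, then applies the perturbation formula $G^{V_\eta}_{(a,b)} f = G_{(a,b)} f - G_{(a,b)}(V_\eta G^{V_\eta}_{(a,b)} f)$ to get $\varphi_n = (\lambda_n+\eta)G_{(a,b)}\varphi_n - G_{(a,b)}(V_\eta\varphi_n)$, and finally invokes a lemma on differentiating $G_{(a,b)}f$. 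Your localization buys you two things: the Poisson-kernel term is smooth (no singularity for $z$ in a compact subset of $B$), and you only need $V$ to be \emph{locally bounded} near $x$ (automatic from continuity), avoiding gaugeability and the $L^1((a,b))$ hypothesis. What it costs you is a justification of Dynkin's formula (or the equivalent mean-value identity) for $\varphi_n$, including the a priori well-definedness of $(-\Delta)^{\alpha/2}\varphi_n$; the paper sidesteps this entirely, since its representation is obtained from purely semigroup/resolvent manipulations that are valid for any $L^2$-eigenfunction. Both arguments hinge on the same two Green-function facts in the crucial near-diagonal analysis — $|\partial_z G(z,y)|\lesssim |z-y|^{\alpha-2}$ for the far set and $G(z,y)\lesssim |z-y|^{\alpha-1}$ for the $O(|h|)$-neighbourhood — and both need $\alpha>1$ at exactly the points you identify. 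If you carry out your route, be explicit about why Dynkin's formula is applicable to $\varphi_n$ on $B$ (continuity and boundedness of $\varphi_n$ plus pointwise existence of $(-\Delta)^{\alpha/2}\varphi_n$ on $(a,b)$ are enough, but these need a citation or short argument rather than an assertion), and note that your Poisson-kernel derivative bound uses that the integration in $y$ is restricted to the bounded set $(a,b)\setminus B$, as $\varphi_n$ vanishes outside $(a,b)$.
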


Let us recall that our orthonormal basis $\left\{\varphi_n\right\}$ is chosen so that each $\varphi_n$ is either symmetric or antisymmetric in $(a,b)$. It follows from the fact that $\left\{\varphi_n\right\}$ is an orthonormal basis that among $\varphi_n$ there are infinitely many antisymmetric functions in $(a,b)$. Denote by $\lambda_{*}$ the smallest eigenvalue corresponding to the antisymmetric eigenfunction $\varphi_{*}$. It is a natural hypothesis that $\lambda_{*} = \lambda_2$. For the classical Schr\"odinger operator on the the interval this fact is well known. It is a consequence of the Courant-Hilbert theorem which states that $\varphi_2$ has exactly two nodal domains (the interval consists of exactly two subintervals on which the sign of $\varphi_2$ is fixed). In our case this problem is more complicated. This is due to the fact that any version of the Courant-Hilbert theorem is not known for operators which are non-local. Despite this fact, this hypothesis was proved by R. Ba\~nuelos and T. Kulczycki \cite{bib:BK2} for $\alpha = 1$ and $V \equiv 0$. Very recently, M. Kwa\'snicki proved this hypothesis in \cite{bib:Kw1} for $\alpha \in (1,2)$ and $V \equiv 0$ also.
                                                                                                       
The following lower bound for $\lambda_{*} - \lambda_1$ is derived by using Theorem \ref{th:gsp} only. 

\begin{theorem}
\label{th:sga}
Assume that $\alpha \in (0,2)$. Let $V \in \cV^{\alpha}((a,b))$, $-\infty<a<b<\infty$. Let $\lambda_{*}$ be the smallest eigenvalue corresponding to the antisymmetric eigenfunction $\varphi_{*}$. Then we have
\begin{align}
\label{eq:sga}
\lambda_{*} - \lambda_1 \geq \frac{\cA_{-\alpha}}{(b-a)^{\alpha}}.
\end{align}
\end{theorem}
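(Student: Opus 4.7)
The plan is to apply Proposition \ref{prop:var} with $n$ such that $\lambda_n = \lambda_{*}$, evaluated at its minimizer $f = \varphi_{*}/\varphi_1$. By Theorem \ref{th:gsp}, $\varphi_1$ is symmetric about $c := (a+b)/2$, and since $\varphi_{*}$ is antisymmetric by construction, so is $f$. Writing
\[
I := \int_a^b\int_a^b \frac{(f(x)-f(y))^2}{|x-y|^{1+\alpha}}\, \varphi_1(x)\varphi_1(y)\,dx\,dy,
\]
one has $\lambda_{*} - \lambda_1 = (\cA_{-\alpha}/2)\, I$ and $\int_a^b f^2 \varphi_1^2\, dx = \int_a^b \varphi_{*}^2\,dx = 1$, so it suffices to prove $I \geq 2/(b-a)^\alpha$.

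The first step will be a mirror identity: the substitution $y \mapsto a+b-y$, combined with $f(a+b-y)=-f(y)$ and $\varphi_1(a+b-y) = \varphi_1(y)$, produces the equivalent representation $I = \int_a^b\int_a^b (f(x)+f(y))^2/|x+y-(a+b)|^{1+\alpha}\, \varphi_1(x)\varphi_1(y)\,dx\,dy$. Adding the two expressions for $I$ and using $1/p^{1+\alpha} \geq 1/\max(p,q)^{1+\alpha}$ for both $p = |x-y|$ and $q = |x+y-(a+b)|$, together with $(f(x)-f(y))^2+(f(x)+f(y))^2 = 2(f(x)^2+f(y)^2)$ and the algebraic identity $\max(|u-v|,|u+v|) = |u|+|v|$ (with $u = x-c$, $v = y-c$), one obtains, after exploiting the $x \leftrightarrow y$ symmetry,
\[
I \geq 2 \int_a^b f(x)^2 \varphi_1(x)\, H(x)\, dx, \qquad H(x) := \int_a^b \frac{\varphi_1(y)}{(|x-c|+|y-c|)^{1+\alpha}}\,dy.
\]

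The final step will be a pointwise lower bound for $H(x)$ via the unimodality of $\varphi_1$ from Theorem \ref{th:gsp}. Setting $r := |x-c| \leq (b-a)/2$ and restricting the integration in $H(x)$ to $\{y : |y-c| \leq r\}$, I would use both $|x-c|+|y-c| \leq 2r$ and $\varphi_1(y) \geq \varphi_1(x)$ on this set (valid by the symmetry and unimodality of $\varphi_1$) to get $H(x) \geq \varphi_1(x)\cdot(2r)/(2r)^{1+\alpha} = \varphi_1(x)/(2r)^\alpha \geq \varphi_1(x)/(b-a)^\alpha$. Plugging back yields $I \geq (2/(b-a)^\alpha) \int_a^b f^2 \varphi_1^2\, dx = 2/(b-a)^\alpha$, which completes the proof.

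The delicate point is the choice of the pointwise inequality that couples both halves of the mirror identity. The algebraic identity $\max(|u-v|,|u+v|) = |u|+|v|$ is what produces a single radial kernel $(|x-c|+|y-c|)^{-(1+\alpha)}$ whose shape is compatible with the unimodal profile of $\varphi_1$; the naive bound $|x-y| \leq b-a$ would only yield $I \geq (2/(b-a)^{1+\alpha})(\int f^2 \varphi_1)(\int \varphi_1)$, which is too weak because the product $(\int f^2 \varphi_1)(\int \varphi_1)$ need not dominate $b-a$ uniformly in the potential $V$.
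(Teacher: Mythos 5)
Your argument is correct and is essentially the paper's proof in a more streamlined form: the mirror substitution $y\mapsto a+b-y$ combined with $\max(|u-v|,|u+v|)=|u|+|v|$ is a compact rewriting of the paper's four-quadrant decomposition on $(-a,a)$, the algebraic identity $(f(x)-f(y))^2+(f(x)+f(y))^2=2(f^2(x)+f^2(y))$ is used in both, and the pointwise bound on $H(x)$ via unimodality of $\varphi_1$ (restricting the inner integral to $\{|y-c|\leq|x-c|\}$) is precisely the paper's restriction of the inner integral to $(\varepsilon,x)$. The one cosmetic advantage of your version is that it sidesteps the $\varepsilon$-regularization near the centre that the paper uses and then removes by a limit.
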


The next theorem is the main result of this paper.

\begin{theorem}
\label{th:sg}
Assume that $\alpha \in (1,2)$. Let $V \in \cV^{\alpha}((a,b))$, $-\infty<a<b<\infty$. Then we have
\begin{align}
\label{eq:sg}
\lambda_2 - \lambda_1 \geq \frac{C^{(3)}_{\alpha}}{(b-a)^{\alpha}},
\end{align}
where $C^{(3)}_{\alpha} = \frac{\cA_{-\alpha}}{4}\left(\frac{1}{9}\right)^{\frac{\alpha+1}{\alpha-1}}$.
\end{theorem}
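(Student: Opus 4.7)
The starting point is the variational formula of Proposition~\ref{prop:var} with $n=2$, where the infimum is attained at $f = \varphi_2/\varphi_1$. Since each basis element $\varphi_n$ is either symmetric or antisymmetric in $(a,b)$, I split into two cases. If $\varphi_2$ is antisymmetric, then $\lambda_2 = \lambda_{*}$ and Theorem~\ref{th:sga} already yields $\lambda_2-\lambda_1 \ge \cA_{-\alpha}/(b-a)^\alpha$, which dominates $C^{(3)}_\alpha/(b-a)^\alpha$. So the substantive case is $\varphi_2$ symmetric.

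In this case set $g := \varphi_2/\varphi_1$; by Theorem~\ref{th:gsp1} this is differentiable in $(a,b)$ with locally bounded derivative. Since both $g$ and $\varphi_1^2$ are symmetric about the midpoint $m := (a+b)/2$, the orthogonality $\int_a^b g\,\varphi_1^2\,dx = 0$ forces $\int_a^m g\,\varphi_1^2\,dx = 0$, so $g$ must change sign on $(a,m)$. Combined with $\int_a^b g^2 \varphi_1^2\,dx = 1$ and with the unimodality of $\varphi_1$ from Theorem~\ref{th:gsp} (which, together with $\|\varphi_1\|_2 = 1$, forces $\varphi_1 \ge c(b-a)^{-1/2}$ on a definite central subinterval of length comparable to $b-a$), this produces a point $x_0$ in the central region at which $|g(x_0)|$ is bounded below by an explicit constant, and a nearby zero $y_0$ of $g$.

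The main analytical tool is the GRR-type integral inequality mentioned in the abstract: for $\alpha \in (1,2)$ and any sufficiently regular $h$,
\[
(h(x)-h(y))^2 \;\le\; C_\alpha\,|x-y|^{\alpha-1}\int_a^b\!\!\int_a^b \frac{(h(u)-h(v))^2}{|u-v|^{1+\alpha}}\,du\,dv.
\]
I plan to invert this applied to $g$, with $x=x_0$, $y=y_0$, obtaining the lower bound
\[
\int_a^b\!\!\int_a^b \frac{(g(u)-g(v))^2}{|u-v|^{1+\alpha}}\,du\,dv \;\ge\; \frac{g(x_0)^2}{C_\alpha\,|x_0-y_0|^{\alpha-1}}.
\]
Using the pointwise lower bound $\varphi_1(x)\varphi_1(y) \ge c^2(b-a)^{-1}$ on the central subinterval in the variational formula, and scaling out $b-a$, the Dirichlet form is bounded below by a constant multiple of $(b-a)^{-\alpha}$.

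The main obstacle, and the origin of the three-way maximum in $C^{(3)}_\alpha$, is quantifying this argument. The bound on $|g(x_0)|$ comes from Chebyshev-type arguments applied to the two integral constraints, and its size depends on the width of the central subinterval on which $\varphi_1$ is uniformly bounded below. The distance $|x_0-y_0|$ between the ``large'' point of $g$ and the nearest zero likewise depends on whether these are forced to be close (if the sign change occurs deep in the central region) or may be far apart (if the sign change occurs near the boundary, in which case one must either move $x_0$ or re-extract a different large value of $g$ via the derivative bound of Theorem~\ref{th:gsp1}). Optimizing over the width of the central subinterval and splitting into the corresponding subcases, while tracking numerical constants through the GRR inversion, will produce the three candidate constants appearing in $C^{(3)}_\alpha$.
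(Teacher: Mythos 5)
Your treatment of the antisymmetric case is exactly the paper's, and you correctly identify Proposition~\ref{th:fpi} (the GRR-type integral inequality) as the key tool for the symmetric case. However, the symmetric case as you sketch it has a load-bearing gap.

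The problem is the claim that unimodality of $\varphi_1$ together with $\|\varphi_1\|_2=1$ ``forces $\varphi_1 \ge c(b-a)^{-1/2}$ on a definite central subinterval of length comparable to $b-a$.'' This is false: nothing in the hypotheses prevents $\varphi_1$ from being a very narrow, tall spike near the midpoint (think of a deep symmetric well $V$), in which case $\varphi_1$ is not bounded below by any universal $c(b-a)^{-1/2}$ on any central subinterval of definite proportional length. Your whole subsequent argument --- extracting a point $x_0$ of large $|g|$ in the ``central region,'' then using a pointwise lower bound $\varphi_1(x)\varphi_1(y) \ge c^2(b-a)^{-1}$ there --- rests on this false claim, so the plan as stated does not go through.

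The paper avoids needing a uniform pointwise lower bound on $\varphi_1$ entirely. Working on $(-a,a)$ with $f=\varphi_2/\varphi_1$, it sets $a_0=\min\{x\in[0,a):f(x)=0\}$ and splits on which side of $a_0$ carries at least $1/4$ of the mass $\int_0^a f^2\varphi_1^2=1/2$. In Case~1 ($\int_{a_0}^a f^2\varphi_1^2 \ge 1/4$) it picks $b_0$ maximizing $f^2\varphi_1^2$ on $(a_0,a)$, uses unimodality only in the form $\varphi_1(x)\ge\varphi_1(b_0)$ for $a_0\le x\le b_0$ (a monotonicity bound, not a uniform lower bound), and applies Proposition~\ref{th:fpi} on $[a_0,b_0]$ where $f(a_0)=0$; the quantity $f^2(b_0)\varphi_1^2(b_0)$ is bounded below by an average, which absorbs the lengths cleanly since $(b_0-a_0)^{\alpha-1}(a-a_0)\le(2a)^\alpha$. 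Case~2 is similar but first uses Cauchy--Schwarz and the orthogonality $\int f\varphi_1^2=0$ to produce the needed pointwise lower bound on $f^2(a^*)\varphi_1^2(a_0)$ at a maximizer $a^*$ of $f$ on $[0,a_0)$. Finally, you misattribute the three-way maximum in $C^{(3)}_\alpha$: it is inherited verbatim from the three candidate constants in Proposition~\ref{th:fpi} (two GRR-derived and one from the paper's inductive proof), not from subcases of the spectral gap argument --- both cases of the proof of Theorem~\ref{th:sg} give the same final bound $\tfrac{1}{2}C^{(4)}_\alpha(2a)^{-\alpha}$.
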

\noindent

The following theorem contains the new inequality, which is a crucial tool used in proving bound \eqref{eq:sg}. This inequality may be of independent interest.  

\begin{theorem}
\label{th:fpi}
Assume that $\alpha \in (1,2)$. Let $-\infty<a<b<\infty$ and let $f$ be a Lipschitz function in $[a,b]$. If $f(a) = 0$, then 
\begin{align}
\label{eq:fpi}
\int_a^b \int_a^b \frac{(f(x)-f(y))^2}{|x-y|^{1+\alpha}}dxdy \geq C^{(4)}_{\alpha}\frac{f^2(b)}{(b-a)^{\alpha-1}} ,
\end{align}  
where $C^{(4)}_{\alpha} = \left(\frac{1}{9}\right)^{\frac{\alpha+1}{\alpha-1}}$. Similarly, if $f(b)=0$, then 
\begin{align*}
\int_a^b \int_a^b \frac{(f(x)-f(y))^2}{|x-y|^{1+\alpha}}dxdy \geq C^{(4)}_{\alpha}\frac{f^2(a)}{(b-a)^{\alpha-1}}.
\end{align*}
\end{theorem}
\noindent
Let us point out that the above result is analogous to the following fact from the elementary calculus. If $f \in C^1([a,b])$ and $f(a) = 0$, then the inequality
$$
\int_a^b (f^{\prime}(y))^2 dy \geq \frac{1}{b-a}\left(\int_a^b f^{\prime}(y) dy\right)^2 = \frac{1}{b-a} f^2(b)
$$
holds.

The following counterexample shows that in the case $\alpha \in (0,1)$ the inequality \eqref{eq:fpi} does not hold with any positive constant. 
\begin{example}
\label{ex:counter}
Assume that $\alpha \in (0,1)$. Let $f$ be a $C^{\infty}$-class function such that 
\begin{align}
\label{eq:counterex}
f(x) = 
\left\{
\begin{array}{ll} 0, & x <1/4,\\
\in [0,1),           & 1/4 \leq x < 1/2, \\
1,                   & x \geq 1/2.
\end{array} \right. 
\end{align}
Consider the sequence of functions of the form $f_n(x) = f(nx)$, $n \geq 1$. Clearly, each $f_n$ is a $C^{\infty}$-class function such that $f_n(0) = 0$, $f_n(1)=1$. However,
\begin{align}
\label{eq:counter}
\int_0^1 \int_0^1 \frac{(f_n(x)-f_n(y))^2}{|x-y|^{1+\alpha}}dxdy \to 0 \ \ \ \ \ \ \ \text{as} \ \ n \to \infty.
\end{align}
\end{example}

The justification of the above example is a very special version of similar one in \cite[Section 2]{bib:D} and is given in Section \ref{sec:crucineq}.

Note that the constants $C^{(3)}_{\alpha}$ and $C^{(4)}_{\alpha}$ obtained in Theorem \ref{th:sg} and Theorem \ref{th:fpi}, respectively, are not optimal. As we will see below, the inequality \eqref{eq:fpi} is an important argument used in proving the bound \eqref{eq:sg}. Indeed, we have $C^{(3)}_{\alpha} = \cA_{-\alpha} / 4 \ C^{(4)}_{\alpha}$. It follows that by improving the constant $C^{(4)}_{\alpha}$ in the inequality \eqref{eq:fpi}, one can improve the constant $C^{(3)}_{\alpha}$ in \eqref{eq:sg}. Notice also that in view of Theorem \ref{th:sga} another way to improve the constant in Theorem \ref{th:sg} is to show that $\lambda_{*} = \lambda_2$.

A consequence of Theorem \ref{th:fpi} is the following fractional version of the weighted Poincar\'e inequality.

\begin{corollary} 
\label{cor:fwpi}
Assume that $\alpha \in (1,2)$. Let $-\infty<a<b<\infty$ and let $g:[a,b] \rightarrow \R$ be continuous, nonincreasing and strictly positive in $[a,b)$. Let $f$ be a Lipschitz function on the interval $[a,b]$ such that $f(a) = 0$. Then 
\begin{align}
\label{eq:fwpi}
\int_a^b \int_a^b \frac{(f(x)-f(y))^2}{|x-y|^{1+\alpha}}g(x)g(y) dxdy \geq \frac{C^{(4)}_{\alpha}}{(b-a)^{\alpha}}\int_a^b f^2(x)g^2(x) dx,
\end{align}  
where $C^{(4)}_{\alpha}$ is given in Theorem \ref{th:fpi}. 
\end{corollary}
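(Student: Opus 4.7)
The plan is to deduce the weighted inequality by applying Proposition~\ref{th:fpi} on every subinterval of the form $[a,x]$ with $x \in (a,b]$, and then integrating against the weight. Since $f$ is Lipschitz on $[a,b]$, its restriction to $[a,x]$ is again Lipschitz, so Proposition~\ref{th:fpi} gives
$$
\int_a^x \int_a^x \frac{(f(u)-f(v))^2}{|u-v|^{1+\alpha}}\,du\,dv \;\geq\; C^{(4)}_{\alpha}\frac{(f(x)-f(a))^2}{(x-a)^{\alpha-1}} \;=\; C^{(4)}_{\alpha}\frac{f^2(x)}{(x-a)^{\alpha-1}},
$$
using $f(a)=0$. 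Rearranging yields a pointwise estimate for $f^2(x)$.

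The next step is to insert the weight. Since $g$ is nonincreasing and $u,v\in[a,x]$ forces $g(u),g(v)\geq g(x)$, I get $g^2(x)\leq g(u)g(v)$ on the square $[a,x]^2$. Multiplying the previous bound by $g^2(x)$ and putting the weight inside the double integral gives
$$
f^2(x)\,g^2(x) \;\leq\; \frac{(x-a)^{\alpha-1}}{C^{(4)}_{\alpha}} \int_a^x \int_a^x \frac{(f(u)-f(v))^2}{|u-v|^{1+\alpha}}\,g(u)g(v)\,du\,dv.
$$

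Now I integrate in $x$ from $a$ to $b$ and use Fubini to swap the outer $x$-integral with the $(u,v)$-integrals. For fixed $u,v$, the indicator $\mathbf{1}_{\{u\leq x,\,v\leq x\}}$ restricts $x$ to $[\max(u,v),b]$, and
$$
\int_{\max(u,v)}^{b}(x-a)^{\alpha-1}\,dx \;=\; \frac{(b-a)^{\alpha}-(\max(u,v)-a)^{\alpha}}{\alpha} \;\leq\; \frac{(b-a)^{\alpha}}{\alpha} \;\leq\; (b-a)^{\alpha},
$$
where the last bound uses $\alpha\in(1,2)$, so $1/\alpha<1$. Plugging this in gives
$$
\int_a^b f^2(x)\,g^2(x)\,dx \;\leq\; \frac{(b-a)^{\alpha}}{C^{(4)}_{\alpha}} \int_a^b\!\int_a^b \frac{(f(u)-f(v))^2}{|u-v|^{1+\alpha}}\,g(u)g(v)\,du\,dv,
$$
which is exactly \eqref{eq:fwpi} after rearrangement.

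There is essentially no obstacle: the argument is a localization of Proposition~\ref{th:fpi} combined with the monotonicity of $g$ and a Fubini calculation. The only mild points to check are that all integrands are measurable and that the Fubini swap is legitimate; both follow because $f$ is Lipschitz (hence bounded) and $g$ is continuous and bounded on $[a,b]$, so the double integrals in question are finite. Note that the factor $1/\alpha$ that appears in the Fubini step could in principle be kept to sharpen the constant from $C^{(4)}_{\alpha}$ to $\alpha\, C^{(4)}_{\alpha}$, but the stated form already follows.
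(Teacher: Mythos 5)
Your proof is correct, but it takes a genuinely different route from the paper's. The paper picks a single point $b_0 \in (a,b]$ where $f^2 g^2$ attains its maximum, applies Proposition~\ref{th:fpi} once on $[a,b_0]$ after pulling out the constant weight $g^2(b_0)$ (using monotonicity of $g$ on the square $[a,b_0]^2$), and then bounds $\int_a^b f^2 g^2$ by $(b-a)$ times the maximum. You instead apply Proposition~\ref{th:fpi} on every subinterval $[a,x]$ to get a pointwise bound for $f^2(x)$, insert the weight via $g^2(x) \leq g(u)g(v)$ on $[a,x]^2$, and then integrate in $x$ and use Fubini; the Fubini swap is legitimate since the Lipschitz bound $(f(u)-f(v))^2 \leq L^2|u-v|^2$ makes the double integral finite as $1-\alpha > -1$. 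The paper's argument is a little shorter, while yours is more of a ``layer-cake'' localization; as you correctly observe, yours in fact retains a spare factor of $\alpha$, yielding the slightly sharper constant $\alpha\,C^{(4)}_{\alpha}$ in place of $C^{(4)}_{\alpha}$, which the max-point argument cannot recover in general (since $b_0$ can be arbitrarily close to $b$).
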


The paper is organized as follows. In Section 2 we introduce additional notation and collect various facts which are used below. Section 3 contains the proof of the variational formulas for eigenvalue gaps. In Section 4 we prove the properties of eigenfunctions. Section 5 contains the proof of Theorem \ref{th:sga}. In Section 6 we show the crucial inequality in Theorem \ref{th:fpi} and justify Example \ref{ex:counter}. Section 7 contains the proof of main theorem concerning the spectral gap lower bound. 

\section{Preliminaries}

Let $\alpha \in (0,2)$. By $C_{\alpha,\kappa}$ we always mean a strictly positive and finite constant depending on $\alpha$ and parameter $\kappa$. We adopt the convention that constants in some proofs may change their value from one use to another. However, very often, especially in the statements of our results, we write $C_\kappa^{(1)}, C_\kappa^{(2)}$ etc to distinguish between constants.

We now summarize the properties of the symmetric $\alpha$-stable process and some facts from its potential
theory. For further information on the potential theory of stable processes we refer to
\cite{bib:Bo, bib:Ku1, bib:CS1, bib:BBKRSV, bib:BKK}.

Let $X = (X_t)_{t \geq 0}$ be the standard one-dimensional symmetric $\alpha$-stable process with the L\'evy measure $\nu(dx)=\cA_{-\alpha}|x|^{-1-\alpha} dx$, where the constant $\cA_{-\alpha}$ is given by \eqref{eq:constant}. By $\pr^x$ we denote the distribution of the process starting at $x \in \R$. For each fixed $t>0$ the transition density $p(t,y-x)$, $t> 0$ , $x,y \in \R$, of the process $X$ is a continuous and bounded function on $\R \times \R$ satisfying the following estimates
\begin{align}
\label{eq:kernest}
C_{\alpha}^{-1}\left(\frac{t}{|y-x|^{1+\alpha}}\wedge t^{-1/\alpha}\right) & \leq p(t,y-x)
\leq C_{\alpha} \left(\frac{t}{|y-x|^{1+\alpha}}\wedge t^{-1/\alpha} \right).
\end{align}
It is known that for any $x, y \in D$, $x \neq y$, we have 
\begin{align}
\label{eq:levy1}
\lim_{t \to 0^{+}} \frac{p(t,y-x)}{t} = \frac{\cA_{-\alpha}}{|x-y|^{1+\alpha}}.
\end{align}
Denote by 
\begin{align}\label{eq:BM}
q(t,y-x) = \frac{1}{\sqrt{4 \pi t}} \exp\left(-\frac{|x-y|^2}{4t}\right), \ \ \ t>0, \ x,y \in \R,
\end{align}
the transition density of the standard one dimensional Brownian motion $(B_t)_{t \geq 0}$ runing at twice the speed. 

It is well known that the symmetric $\alpha$-stable process $(X_t)_{t \geq 0}$ can be represented as 
$$
X_t = B_{\eta_t},
$$
where $(\eta_t)_{t \geq 0}$ is an $\alpha/2$-stable subordinator independent of $(B_t)_{t \geq 0}$ (see \cite{bib:BBKRSV}). Thus
\begin{align}
\label{eq:subfor}
p(t,y-x) = \int_0^\infty q(s,y-x)g_{\alpha/2}(t,s)ds,
\end{align}
where $g_{\alpha/2}(t,s)$ is the transition density of $\eta_t$. 

It is known that when $\alpha < 1$, the process $X$ is transient with potential kernel
\cite{bib:BlG, bib:La}
$$
K^{(\alpha)}(y-x) = \int_0^\infty p(t,y-x)dt = \cA_{\alpha} |y-x|^{\alpha-1}, \quad x,y \in \R.
$$
Whenever $\alpha \geq 1$ the process is recurrent (pointwise recurrent when $\alpha > 1$). In this case
we can consider the compensated kernel \cite{bib:BlGR}, that is, for $\alpha \geq 1$ we put
$$
K^{(\alpha)}(y-x) = \int_0^\infty \left( p(t,y-x ) - p(t,x_0)\right)dt,
$$
where $x_0 = 0$ for $\alpha > 1$, and $x_0 = 1$ for $\alpha = 1$. In this case
$$
K^{(\alpha)} (x) = \frac{1}{\pi} \log\frac{1}{|x|}
$$
for $\alpha = 1$ and
$$
K^{(\alpha)} (x) = (2\Gamma(\alpha) \cos(\pi\alpha/2))^{-1} |x|^{\alpha-1}, \ \ \ \ x \in \R,
$$
for $\alpha > 1$. Note that $K^{(\alpha)} (x) \leq 0$ if $\alpha > 1$.

We say that the Borel function $V: \R \to \R$ belongs to the Kato class $\cK^\alpha$ corresponding to the symmetric $\alpha$-stable process $X$
if $V$ satisfies either of the two equivalent conditions (see \cite{bib:Z} and \cite[(2.5)]{bib:BB2})
$$
\lim_{\epsilon \rightarrow 0} \sup_{x \in \R} \int_{|y-x|<\epsilon} |V(y)| |K^{(\alpha)} (y-x)| dy = 0,
$$
$$
\lim_{t \rightarrow 0} \sup_{x \in \R} \ex^x \left[\int_0^t |V(X_s)| ds\right] = 0.
$$
For instance, if $V(x) = (1-x^2)^{-\beta}$, $\beta>0$, then $V \in \cK^\alpha$ and $V \in \cV^{\alpha}((-1,1))$ provided that $\beta < \alpha \wedge 1$. It can be verified directly that for every $\alpha \in (0,2)$, $\cK^\alpha \subset L^1_{\loc}(\R)$. 

We denote by $p_D(t,x,y)$ the transition density of the process killed upon exiting an open bounded set $D \subset \R$.
It satisfies the relation
\begin{align*}
p_D(t,x,y) & = p(t,y-x) - \ex^x[p(t-\tau_D,y-X_{\tau_D}); \tau_D \leq t], \quad x, y \in D, \, t > 0,
\end{align*}
where $\tau_D = \inf\{t > 0: \, X_t \notin D\}$ is the first exit time from $D$. For every $t > 0$, $x,y \in D$, we have
\begin{align}
\label{eq:kernest1}
0 < p_D(t,x,y) \leq p(t,y-x).
\end{align}

For any $x, y \in D$, $x \neq y$, we have 
\begin{align}
\label{eq:levy2}
\lim_{t \to 0^{+}} \frac{p_D(t,x,y)}{t} = \frac{\cA_{-\alpha}}{|x-y|^{1+\alpha}}.
\end{align}
\noindent
In view of \eqref{eq:levy1}, this is a consequence of the fact that for every $x,y \in D$ we have 
$$
\lim_{t \to 0^{+}} \frac{1}{t} \ex^x[p(t-\tau_D,y-X_{\tau_D}); \tau_D \leq t]= 0.
$$
Indeed, for $x,y \in D$, $t> 0$, we obtain
\begin{align}
\label{eq:killing2}
\frac{1}{t} \ex^x[p(t-\tau_D,y-X_{\tau_D}); \tau_D \leq t] \leq \frac{1}{t} \ex^x\left[\frac{C t}{|y-X_{\tau_{D}}|^{1+ \alpha}}; \tau_{D} \leq t\right] \leq C \frac{\pr^x(\tau_{D} \leq t)}{\delta(y)^{1+\alpha}},
\end{align}
where $\delta(y) = \inf\left\{|z-y|: z \in \partial D\right\}$. 

Let us recall that the semigroup of the process killed upon leaving set $D$ is given by 
$$
P^D_t f(x) = \ex^x[f(X_t); \tau_D > t] = \int_D f(y) p_D(t,x,y) dy, \ \ \ \ \ \ \ \ \ f \in L^2(D), \ x \in D.
$$
The Green operator of an open bounded set $D$ is denoted by $G_D$. We set 
$$
G_D(x,y) = \int_0^\infty p_D(t,x,y)dt
$$
and call $G_D(x,y)$ the Green function for $D$. We have
$$
G_D f(x) = \ex^x\left[\int_0^{\tau_D} f(X_t) dt \right] = \int_D G_D(x,y)f(y)dy,
$$
for non-negative Borel function $f$ on $\R$. 

We now discuss some selected properties of the Feynman-Kac semigroup for the fractional Schr\"odinger operator with a potential $V$ on bounded intervals of $\R$, which are needed below. For the rest of this section we assume that $D$ is a bounded interval $(a,b) \subset \R$, $a<b$, and $V \in \cV^{\alpha}((a,b))$. 
We refer the reader to \cite{bib:BB1, bib:BB2, bib:CS, bib:CS1, bib:CZ} for more systematic treatment of fractional Schr\"odinger operators.

The $V$-Green operator for $(a,b)$ is defined by
\begin{align*}
G^V_{(a,b)} f(x)  = \int_0^\infty T_t f(x) dt =
\ex^x \left[\int_0^{\tau_{(a,b)}} e^{-\int_0^t V(X_s) ds} f(X_t) dt \right],
\end{align*}
for non-negative Borel functions $f$ on $(a,b)$. The corresponding gauge function is given by (see e.g. \cite[p. 58]{bib:BB1}, \cite{bib:CS1,bib:CZ})
$$
u_{(a,b)}(x) = \ex^x\left[e^{-\int_0^{\tau_{(a,b)}} V(X_s) ds}\right], \ \ \ \ \  x \in (a,b).
$$  
When it is bounded in $(a,b)$, then $((a,b),V)$ is said to be gaugeable. It is easy to check that if $V \geq 0$ on $(a,b)$, then gaugebility holds. 

The following perturbation type formula will be an important argument in the proof of differentiability of eigenfunctions. For the potential $V$ such that $((a,b),V)$ is gaugable and for bounded function $f$ we have (see \cite[Formula 9]{bib:BB1}) 
\begin{align}
\label{eq:pertform}
G^V_{(a,b)} f(x) = G_{(a,b)} f(x) + G_{(a,b)} (V G^V_{(a,b)} f)(x), \ \ \ \ \ x \in (a,b).
\end{align}
Since $V \in \cV^{\alpha}((a,b))$, $\inf V > - \infty$. Observe that when $V \equiv 0$, then $(T_t)_{t \geq 0}$ is an usual semigroup of the symmetric stable process killed upon leaving $(a,b)$. That is for each $t \geq 0$ we have $T_t = P_t^{(a,b)}$. Generally, for $f \geq 0$ we have
$$
T_t f (x) \leq e^{-\inf V t} P^{(a,b)}_t f(x), \ \ \ \ \ \ t> 0, \ \ x \in (a,b).
$$
Using this fact, \eqref{eq:kernest}, \eqref{eq:kernest1}, and the Riesz-Thorin interpolation theorem, it can be shown that for each $t>0$ the operators $T_t:L^p((a,b)) \rightarrow L^q((a,b))$, $1 \leq p \leq q \leq \infty$, are bounded. Denote by $\left\|T_t\right\|_{p,q}$ the norm of a such operators.

Recall that the class $\cV^{\alpha}((a,b))$ contains the signed potentials $V$. So we do not exclude the case that the operators $T_t$ are not sub-Markovian. However, each operator $T_t$ can be transformed to be sub-Markovian by adding a constant to the potential $V$. Indeed, if $\inf V < 0$, then we put $V_0 = V - \inf V$. Clearly, $V_0 \geq 0$, and thus, the corresponding Feynman-Kac semigroup is sub-Markovian. Denote this semigroup by $(T^0_t)_{t \geq 0}$. It can be checked directly that its generator is the operator $L_0=L+\inf V$ with purely discrete spectrum of the form $\left\{-\lambda_1 + \inf V,-\lambda_2 + \inf V, -\lambda_3 + \inf V,... \right\}$. However, both operators $L$ and $L_0$ have the same eigenvalue gaps $\lambda_n - \lambda_1$, $n \geq 1$. We will use this translation invariance property of the eigenvalue gaps in the sequel. 

\section{Variational formula}

We now justify the variational formula in Proposition \ref{prop:var}. In fact, Proposition \ref{prop:var} is a consequence of the standard variational formula for eigenvalues and a special case of \cite[Theorems 2.6 and 2.8]{bib:CFTYZ}. By using the above remark on the translation invariance of the eigenvalue gaps, we may and do assume that $V \geq 0$ and that the corresponding Feynman-Kac semigroup $(T_t)_{t \geq 0}$ is sub-Markovian. For every $t>0$ define operators:
$$
\widetilde T_t f = e^{\lambda_1 t}\varphi_1^{-1}  T_t (\varphi_1 f), \ \ \ \ \ \ \ \ f \in L^2((a,b),\varphi_1^2).
$$
It is easy to see that operators $\widetilde T_t$, $t>0$, form a semigroup of symmetric Markov operators on $L^2((a,b),\varphi_1^2)$ such that 
$$
\widetilde T_t \left(\frac{\varphi_i}{\varphi_1}\right) = e^{-(\lambda_i - \lambda_1)t} \frac{\varphi_i}{\varphi_1}, \ \ \ \ \ \ n \geq 1.
$$
Let
\begin{align}
\label{eq:var1}
\widetilde \cE(f,f) = \lim_{t \to 0^{+}} \frac{1}{t} (f - \widetilde T_t f, f)_{L^2((a,b), \varphi_1^2)}
\end{align}
for $f \in L^2((a,b),\varphi_1^2)$. It is known that the form $\widetilde \cE$ with its natural domain 
$$\cD(\widetilde \cE) = \left\{f \in L^2((a,b), \varphi_1^2): \widetilde \cE(f,f) < \infty\right\}$$
is the Dirichlet form corresponding to the semigroup $(\widetilde T_t)_{t>0}$ \cite[p. 23]{bib:FOT}. By the standard variational formula for eigenvalues we have 
$$
\lambda_n - \lambda_1 = \inf_{f \in \cF_n} \widetilde \cE(f,f), \ \ \ \ \ \ \ \ \ n \geq 2,
$$
and the infimum is achieved for $f = \varphi_n / \varphi_1$. Note that by the intrinsic ultracontractivity of the Feynman-Kac semigroup $(T_t)_{t \geq 0}$ (see \cite[Theorem 14 and (2)]{bib:Ku2}), each function $f =\frac{\varphi_n}{\varphi_1}$, $n \geq 2$, is bounded and continuous in the interval $(a,b)$. Thus to complete the proof of Proposition \ref{prop:var} it is enough to show that for all $f \in C_{\bound}((a,b))$ we have 
\begin{align}
\label{eq:stand}
\widetilde \cE(f,f) = \frac{\cA_{-\alpha}}{2} \int_{a}^b \int_{a}^b \frac{(f(x)-f(y))^2}{|x-y|^{1+\alpha}} \varphi_1(x) \varphi_1(y) dx dy.
\end{align}
The equality \eqref{eq:stand} is a special case of more general results in \cite[Theorems 2.6 and 2.8]{bib:CFTYZ}. However, its direct proof in our case is very simple and we decided to provide it. 

The proof of the equality \eqref{eq:stand} consists of the two following lemmas.  

\begin{lemma}
\label{lm:FKlevy}
Assume that $\alpha \in (0,2)$. Let $-\infty<a<b<\infty$ and $V \in \cV^{\alpha}((a,b))$ be a nonnegative potential. Let $F:(a,b) \times (a,b) \rightarrow \R$ be a bounded, Borel measurable function such that for each $x \in (a,b)$ $F(x,y) \to 0$ as $y \to x$. Denote
$$
g_t(x) = \ex^x \left[\frac{e^{-\int_0^t V(X_s)ds} - 1}{t} F(x,X_t) ; \tau_{(a,b)} > t\right].
$$
Then we have $\left\|g_t \varphi_1\right\|_{L^1((a,b))} \to 0$ as $t \to 0^{+}$. 
\end{lemma} 

\begin{proof}
First we show that $g_t(x) \to 0$ as $t \to 0^{+}$ pointwise, for any $x \in (a,b)$. Fix $x \in (a,b)$. Let $\delta(x) = (b-x) \wedge (x-a)$. Denote $\tau_x = \tau_{(x-\delta(x)/2, x+ \delta(x)/2)}$. We have 
\begin{align*}
g_t(x) & = \ex^x \left[\frac{e^{-\int_0^t V(X_s)ds} - 1}{t} F(x,X_t) ; \tau_x > t\right] \\ & \ \ \ + \ex^x \left[\frac{e^{-\int_0^t V(X_s)ds} - 1}{t} F(x,X_t) ; \tau_{(a,b)} > t \geq \tau_x \right] = g^{(1)}_t(x) + g^{(2)}_t(x).
\end{align*}
By the fact that 
$$
\left|\frac{e^{-\int_0^t V(X_s)ds} - 1}{t}\right| \leq \frac{1- e^{-\sup_{y \in (x-\delta(x)/2, x+ \delta(x)/2)}V(y) t}}{t} \leq \sup_{y \in (x-\delta(x)/2, x+ \delta(x)/2)} V(y) < \infty
$$
on $\left\{\tau_x > t\right\}$, by the right continuity of the paths and by the boundedness of $F$, we clearly obtain that $g^{(1)}_t(x) \to 0$ as $t \to 0^{+}$. On the other hand, by the fact that $|e^{-y} - e^{-z}| \leq |y-z|$ for $y,z \geq 0$ and by the strong Markov property, we have 
\begin{align*}
|g^{(2)}_t(x)| & \leq \frac{1}{t} \left\|F\right\|_{\infty} \ex^x \left[\int_0^t V(X_s)ds  ; \tau_{(a,b)} > t > \tau_x \right] \\
& = \frac{1}{t} \left\|F\right\|_{\infty} \left(\ex^x \left[\int_0^{\tau_x} V(X_s)ds  ; \tau_{(a,b)} > t > \tau_x \right] \right. + \left.\ex^x \left[\int_{\tau_x}^t V(X_s)ds  ; \tau_{(a,b)} > t > \tau_x \right]\right) \\
& \leq \frac{1}{t} \left\|F\right\|_{\infty} \left(\sup_{y \in (x-\delta(x)/2, x+ \delta(x)/2)}V(y) \ex^x \left[\tau_x; t >\tau_x \right] \right.+ \left.\ex^x \left[\ex^{X_{\tau_x}} \left[\int_0^t V(X_s)ds \right] ; t > \tau_x \right]\right) \\
& \leq \frac{1}{t} \left\|F\right\|_{\infty} \pr^x (t > \tau_x ) \left(t \sup_{y \in (x-\delta(x)/2, x+ \delta(x)/2)}V(y) + \sup_{y \in \R} \ex^y \left[\int_0^t V(X_s)ds \right]\right)
\end{align*}
By using the fact that $\pr^x (t > \tau_x ) \leq C_{\alpha} t {\delta(x)}^{-\alpha}$ (see \cite[Lemma 4.3]{bib:BSS}) and $V \in \cK^{\alpha}$, it follows that $g^{(2)}_t(x) \to 0$ as $t \to 0^{+}$ . 

We now show that also $\left\|g_t \varphi_1 \right\|_{L^1((a,b))} \to 0$ as $t \to 0^{+}$. Since $V \in \cK^{\alpha}$, there exists a constant $C_{\alpha,V}$ such that for all $t \in (0,1]$ $\sup_{y \in \R} \ex^y\left[\int_0^t V(X_s)ds\right] \leq C_{\alpha,V}$. The above estimates for $g_t^{(1)}(x)$, $g_t^{(2)}(x)$ imply
$$
|g_t^{(1)}(x)| \leq \left\|F\right\|_{\infty} \sup_{y \in (x-\delta(x)/2, x+ \delta(x)/2)} V(y),
$$
$$
|g_t^{(2)}(x)| \leq \left\|F\right\|_{\infty} \left( \sup_{y \in (x-\delta(x)/2, x+ \delta(x)/2)} V(y) + C_{\alpha,V} \delta(x)^{-\alpha} \right),
$$
for $x \in (a,b)$, $t \in (0,1]$. A consequence of the above bounds and the fact that $\varphi_1(x) \leq C_{V,a,b} {\delta(x)}^{\alpha/2}$ for $x \in (a,b)$ (see \cite[Proposition 13 and Remark 11]{bib:Ku2}) is the following estimate for $t \in (0,1]$
\begin{align*}
|g_t(x) \varphi_1(x)| & = (|g_t^{(1)}(x)| + |g_t^{(2)}(x)|) \varphi_1 (x) \\
& \leq C_{V,a,b} \left\|F\right\|_{\infty} \left(2 \sup_{y \in (x-\delta(x)/2, x+ \delta(x)/2)}V(y) + C_{\alpha,V} \delta(x)^{-\alpha} \right) \delta(x)^{\alpha/2} \\ & \leq C_{V,a,b} \left\|F\right\|_{\infty}  \left(V\left(\frac{a+x}{2}\right) \vee V\left(\frac{x+b}{2}\right) +  \delta(x)^{-\alpha/2} \right).
\end{align*}
Recall that the constants may change their value from one use to another. By the fact that $V \in L^1((a,b))$ the right hand side of the above inequality is integrable in the interval $(a,b)$ and the assertion of the lemma follows now from the dominated convergence theorem. 
\end{proof}

\begin{lemma}
\label{lm:rD}
Assume that $\alpha \in (0,2)$. Let $-\infty<a<b<\infty$ and $V \in \cV^{\alpha}((a,b))$ be a nonnegative potential. Then for all $f \in C_{\bound}((a,b))$ we have 
\begin{align}
\label{eq:formD}
\widetilde \cE(f,f) = \frac{\cA_{-\alpha}}{2} \int_{a}^b \int_{a}^b \frac{(f(x)-f(y))^2}{|x-y|^{1+\alpha}} \varphi_1(x) \varphi_1(y) dx dy.
\end{align}
\end{lemma}

\begin{proof}
By \eqref{eq:var1} we have 
\begin{align*}
\widetilde \cE(f,f) & = \lim_{t \to 0^{+}} \frac{1}{t} \int_a^b \left(f(x) - \frac{e^{\lambda_1 t}}{\varphi_1(x)} T_t (\varphi_1 f)(x)\right) f(x) \varphi_1^2(x) dx \\
& = \lim_{t \to 0^{+}} \frac{1}{t} \int_a^b \left(f(x)\varphi_1(x) - e^{\lambda_1 t} T_t (\varphi_1 f)(x)\right) f(x) \varphi_1(x) dx \\
& = \lim_{t \to 0^{+}} \frac{1}{t} \int_a^b \left(f(x)e^{\lambda_1 t} T_t \varphi_1(x) - e^{\lambda_1 t} T_t (\varphi_1 f)(x)\right) f(x) \varphi_1(x) dx \\
& = \lim_{t \to 0^{+}} e^{\lambda_1 t}  \frac{1}{t}\int_a^b  T_t F(x,\:\cdot\:) (x) \varphi_1(x) dx,
\end{align*}
where
$$
F(x,y):= (f^2(x) - f(x) f(y)) \varphi_1(y).
$$
We have 
\begin{align*}
T_t F(x,\:\cdot\:)(x) & = \ex^x \left[e^{-\int_0^t V(X_s)ds} F(x,X_t); \tau_{(a,b)} > t\right] \\ & = P^{(a,b)}_t F(x,\:\cdot\:)(x)
+ \ex^x \left[\left(e^{-\int_0^t V(X_s)ds} - 1\right) F(x,X_t); \tau_{(a,b)} > t\right].
\end{align*}
Thus
\begin{align*}
\widetilde \cE(f,f) & = \lim_{t \to 0^{+}} e^{\lambda_1 t}  \left( \frac{1}{t}\int_a^b P^{(a,b)}_t F(x,\:\cdot\:)(x) \varphi_1(x) dx \right.\\ & \ \ \ \ + \left.  \int_a^b \ex^x \left[\frac{e^{-\int_0^t V(X_s)ds} - 1}{t} F(x,X_t); \tau_{(a,b)} > t\right]\varphi_1(x) dx \right).
\end{align*}
Clearly, by Lemma \ref{lm:FKlevy}, the second integral on the right hand side of the above equality tends to $0$ as $t \to 0^{+}$. It is enough to show that 
\begin{equation}
\begin{split}
\label{eq:formD2}
\lim_{t \to 0^{+}} e^{\lambda_1 t}  \frac{1}{t} & \int_a^b P^{(a,b)}_t F(x,\:\cdot\:)(x) \varphi_1(x) dx 
 = \frac{\cA_{-\alpha}}{2} \int_{a}^b \int_{a}^b \frac{(f(x)-f(y))^2}{|x-y|^{1+\alpha}} \varphi_1(x) \varphi_1(y) dx dy.
\end{split}
\end{equation}
The left hand side of \eqref{eq:formD2} can be rewritten as 
$$
\lim_{t \to 0^{+}} e^{\lambda_1 t} \int_a^b \int_a^b \frac{p_{(a,b)}(t,x,y)}{t} (f^2(x) - f(x)f(y))\varphi_1(y) \varphi_1(x) dy dx,
$$
which by symmetry is equal to
\begin{align}
\label{eq:final}
\lim_{t \to 0^{+}} \frac{e^{\lambda_1 t}}{2} \int_a^b \int_a^b \frac{p_{(a,b)}(t,x,y)}{t} (f(x) - f(y))^2\varphi_1(y) \varphi_1(x) dy dx.
\end{align}
In the light of \eqref{eq:levy2} in order to prove \eqref{eq:formD2} we need only to justify the interchange of the limit and the integral in \eqref{eq:final}. Denote 
$$
\cQ(f,f) = \int_{a}^b \int_{a}^b \frac{(f(x)-f(y))^2}{|x-y|^{1+\alpha}} \varphi_1(x) \varphi_1(y) dx dy.
$$
If $\cQ(f,f) = \infty$, then \eqref{eq:formD2} follows from \eqref{eq:final} by the Fatou lemma. Now let us consider the case $\cQ(f,f) < \infty$. By \eqref{eq:kernest1} and \eqref{eq:kernest} we have for any $t>0$ 
$$
\frac{p_{(a,b)}(t,x,y)}{t} (f(x) - f(y))^2\varphi_1(y) \varphi_1(x) \leq C \frac{(f(x)-f(y))^2}{|x-y|^{1+\alpha}} \varphi_1(x) \varphi_1(y).
$$
Since the integral over $(a,b) \times (a,b)$ of the right hand side of the above inequality is equal to $C \cQ(f,f) < \infty$, the assertion of the lemma follows from \eqref{eq:final} by the dominated convergence theorem.
\end{proof}

\section{Properties of eigenfunctions}

For our convenience we work with the symmetric interval $(-a,a)$, $0<a<\infty$, in the proof of Theorem \ref{th:gsp}. First we need the following auxiliary lemmas. Let us note that our arguments in the proofs of Lemma \ref{lm:midphi}, Lemma \ref{lm:aproks} and Theorem \ref{th:gsp} extend some earlier ideas from \cite{bib:BK3, bib:BKM} and \cite{bib:BM}.

\begin{lemma}
\label{lm:midphi}
Assume that $\alpha \in (0,2)$. Let $0<a<\infty$ and let $V$ be a bounded, symmetric and continuous function on $(-a,a)$ such that $V^{\prime}(x)$ exists in $(-a,a)$ except for at most finite points and $V$ is nonincreasing in $(-a,0)$. Let $n$ be a natural number and let $s_i, t_i$, $i=1,...,n$, be arbitrary positive parameters. For $x \in [-a,a]$ we define 
\begin{align*}
\Phi_n(x; s_1, ..., s_n, t_1, &...,t_n) =  \int_{-a}^a ... \int_{-a}^a \exp\left(-\sum_{i=1}^n t_i V(x_i)\right) \prod_{i=1}^n q(s_i,x_i-x_{i-1})dx_1 ...dx_n,
\end{align*}
where $x_0=x$ and $q$ is given by \eqref{eq:BM}. Then for every $n \geq 1$ and parameters $s_i, t_i$, $i=1,...,n$, $\Phi_n(x; s_1, ..., s_n, t_1, ...,t_n)$ is nondecreasing on $(-a,0)$ and nonincreasing on $(0,a)$ as a function of $x$.
\end{lemma}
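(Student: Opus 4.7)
The plan is to argue by induction on $n$, with the following \emph{convolution property} as the main engine: if $\psi:[-a,a]\to[0,\infty)$ is measurable, symmetric ($\psi(-y)=\psi(y)$), and nondecreasing on $[-a,0]$, then
\begin{equation*}
F(x) := \int_{-a}^a \psi(y)\, q(s, y-x)\, dy
\end{equation*}
is again symmetric on $[-a,a]$ and nondecreasing on $[-a,0]$ (equivalently, nonincreasing on $[0,a]$). Symmetry of $\Phi_n$ itself also holds: the change of variables $x_i \mapsto -x_i$, together with the symmetry of $V$ and the evenness of $q(s,\cdot)$, gives $\Phi_n(-x;\,\cdot\,)=\Phi_n(x;\,\cdot\,)$, so it suffices to establish monotonicity on $[-a,0]$.

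For the base case $n=1$ I apply the convolution property with $\psi(y)=e^{-t_1 V(y)}$: it is symmetric because $V$ is, and nondecreasing on $[-a,0]$ because $V$ is nonincreasing there and $t_1>0$. For the inductive step I write
\begin{equation*}
\Phi_n(x; s_1,\ldots,s_n,t_1,\ldots,t_n) = \int_{-a}^a e^{-t_1 V(y)}\, \Phi_{n-1}(y; s_2,\ldots,s_n,t_2,\ldots,t_n)\, q(s_1, y-x)\, dy .
\end{equation*}
By the induction hypothesis, $\Phi_{n-1}(\cdot; s_2,\ldots,s_n,t_2,\ldots,t_n)$ is symmetric and nondecreasing on $[-a,0]$; multiplying by $e^{-t_1 V}$, which has the same property, preserves symmetry and, since both factors are nonnegative and monotone in the same direction, also preserves monotonicity on $[-a,0]$. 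A final application of the convolution property closes the induction.

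To prove the convolution property I would use the layer-cake decomposition $\psi(y)=\int_0^\infty \mathbf{1}_{\{\psi>\lambda\}}(y)\, d\lambda$. Since $\psi$ is symmetric and unimodal with mode at $0$, each superlevel set $\{\psi>\lambda\}\cap(-a,a)$ is, up to a null set, a symmetric open interval $(-r(\lambda), r(\lambda))$. Fubini's theorem then gives
\begin{equation*}
F(x) = \int_0^\infty \int_{-r(\lambda)}^{r(\lambda)} q(s, y-x)\, dy\, d\lambda ,
\end{equation*}
so it suffices to verify that for each fixed $r\in[0,a]$ the function $x\mapsto \int_{-r}^{r} q(s, y-x)\, dy$ is nondecreasing on $(-\infty,0]$. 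Differentiation under the integral sign (justified by $|\partial_x q(s, y-x)|\le C_s$ uniformly in $y$ on a bounded interval) combined with the Leibniz rule shows that this derivative equals $q(s,-r-x)-q(s,r-x)$, which is nonnegative for $x\le 0$ because $|-r-x|^2-|r-x|^2=4rx\le 0$ and $q(s,\cdot)$ is strictly decreasing in $|z|$. Symmetry of $F$ follows immediately from evenness of $q(s,\cdot)$ via the change of variables $y\mapsto -y$.

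The main obstacle is the clean execution of the layer-cake step: one must record that the superlevel sets of a symmetric, unimodal function are indeed symmetric intervals modulo null sets, and verify that Fubini and differentiation under the integral are legitimate. These are routine given the uniform bound $\psi\le e^{t_1\left\|V\right\|_\infty}$ and the smoothness and integrable derivatives of the Gaussian kernel $q$, but they are the places where care is warranted.
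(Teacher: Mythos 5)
Your proof is correct and takes a genuinely different route from the paper's. Both arguments proceed by induction on $n$, and both ultimately reduce to the same elementary Gaussian comparison (in your version $q(s,-r-x)\ge q(s,r-x)$ for $x\le0\le r$, in the paper's version $e^{-(y-x)^2/(4s)}\ge e^{-(y+x)^2/(4s)}$ for $x,y,s>0$). But the engines differ. The paper differentiates $\Phi_n$ directly, integrates by parts, and uses the a.e.\ existence of $V'$ to conclude that $\frac{d}{dy}\bigl(e^{-t_1V(y)}\Phi_{n-1}(y)\bigr)\le0$ on $(0,a)$; the sign of $\frac{d}{dx}\Phi_n$ then follows from the Gaussian comparison applied pointwise. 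You instead isolate the statement ``convolution with an even, unimodal kernel preserves even unimodality'' as a self-contained lemma, proved by the layer-cake decomposition that reduces the general $\psi$ to indicators of symmetric intervals, where the monotonicity of $x\mapsto\int_{-r}^{r}q(s,y-x)\,dy$ on $(-\infty,0]$ is a one-line computation. A notable side benefit of your approach is that it makes no use of the differentiability of $V$ at all: you only use that $e^{-t_1V}$ is nonnegative, symmetric, and nondecreasing on $[-a,0]$, so your argument in fact establishes the lemma under the weaker hypothesis that $V$ is merely symmetric and nonincreasing on $(-a,0)$. The paper's integration-by-parts argument is somewhat shorter once the boundary computation in \eqref{eq:firstder}--\eqref{eq:nder} is set up, while yours is more modular and avoids the bookkeeping with limits $y\to\pm1$; both are sound. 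The only places in your sketch that deserve the care you already flag are the measurability of $\lambda\mapsto r(\lambda)$ (it is monotone, hence Borel), the observation that a product of two nonnegative functions each nondecreasing on $[-a,0]$ is again nondecreasing there (needed to feed $e^{-t_1V}\Phi_{n-1}$ into the convolution lemma), and the symmetry of $\Phi_{n-1}$ itself (which you correctly obtain by the change of variables $x_i\mapsto-x_i$ using evenness of $q$ and $V$).
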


\begin{proof}
First note that for every $n \geq 1$ and positive parameters $s_i, t_i$, $i=1,...,n$, $\Phi_n(x; s_1, ..., s_n, t_1, ...,t_n)$ is symmetric and positive on $[-a,a]$. Without loosing generality we assume that $a=1$. We use the induction. Integrating by parts and using the fact that 
$V$ is symmetric, we have
\begin{equation}
\label{eq:firstder}
\begin{split}
  \sqrt{4 \pi s_1} & \frac{d}{d x} \Phi_1(x; s_1,t_1) 
 =  - \int_{-1}^1 e^{-t_1 V(y)} \frac{\partial}{\partial y} e^{-\frac{(y-x)^2}{4s_1}}  dy \\
 = & \ e^{-\frac{(1+x)^2}{4s_1}} \lim_{y \to -1} e^{-t_1 V(y)} - e^{-\frac{(1-x)^2}{4s_1}} \lim_{y \to 1} e^{-t_1 V(y)} 
  + \int_{-1}^1e^{-\frac{(y-x)^2}{4s_1}} \frac{d}{d y} e^{-t_1 V(y)}  dy  \\
 = &  \lim_{y \to 1} e^{-t_1 V(y)} \left(e^{-\frac{(1+x)^2}{4s_1}} - e^{-\frac{(1-x)^2}{4s_1}} \right)
  + \int_0^1  \left( e^{-\frac{(y-x)^2}{4s_1}}
  -  e^{-\frac{(y+x)^2}{4s_1}} \right) \frac{d}{d y} e^{-t_1 V(y)}dy .
\end{split}
\end{equation}
Since
\begin{align}
\label{eq:gauss}
e^{-\frac{(y-x)^2}{4s}} & \geq e^{-\frac{(y+x)^2}{4s}}
\, , & x,y,s > 0
\, ,
\end{align}
and
\begin{align*}
\frac{d}{d y} e^{-t V(y)} = - t e^{-t V(y)} V^{\prime}(y) & \leq 0
\, , &  \ t > 0
\, ,
\end{align*}
for almost all $y \in (0,1)$, the expression on the right hand side of \eqref{eq:firstder} is nonpositive for all $x \in (0,1)$. Thus, for every $s_1, t_1 >0$ the function $\Phi_1(x; s_1,t_1)$ is nonincreasing on $(0,1)$. 

Now suppose that for any positive parameters $s_1,...s_{n-1}, t_1,...,t_{n-1}$ the function $\Phi_{n-1}$ is nonincreasing on $(0,1)$, that is, $\frac{d}{d x} \Phi_{n-1}(x; s_1, ..., s_{n-1}, t_1, ...,t_{n-1}) \leq 0$ for $x \in (0,1)$. Our claim is $\frac{d}{d x} \Phi_n(x; s_1, ..., s_n, t_1, ...,t_n) \leq 0$ for $x \in (0,1)$ and for every positive parameters $s_1,...s_n, t_1,...,t_n$. We observe that
\begin{align*}
\Phi_n(x; s_1, ..., s_n, t_1, ...,t_n) = \frac{1}{\sqrt{4 \pi s_1}} \int_{-1}^1 e^{-\frac{(y-x)^2}{4s_1}} e^{-t_1 V(y)} \Phi_{n-1}(y; s_2, ..., s_n, t_2, ...,t_n) dy
\, .
\end{align*}
From now on, for more simplicity, we omit the parameters $s_i,t_i$ in our notation. Integrating by parts, similarly as before, we have  
\begin{equation}
\label{eq:nder}
\begin{split}
\sqrt{4 \pi s_1} & \frac{d}{d x} \Phi_n(x) 
  = - \int_{-1}^1 \frac{\partial}{\partial y} \left(e^{-\frac{(y-x)^2}{4s_1}}\right) e^{-t_1 V(y)}\Phi_{n-1}(y) dy \\
&  = \Phi_{n-1}(1) \lim_{y \to 1} e^{-t_1 V(y)}\left(e^{-\frac{(1+x)^2}{4s_1}} - e^{-\frac{(1-x)^2}{4s_1}} \right) + \int_{-1}^1 e^{-\frac{(y-x)^2}{4s_1}} \frac{d}{d y} \left(e^{-t_1 V(y)}\Phi_{n-1}(y) \right)dy.
\end{split}
\end{equation}
Observe that by \eqref{eq:gauss} the first term on the right hand side of \eqref{eq:nder} is nonpositive. Thus it is enough to show that for all $x \in (0,1)$ the last integral also is nonpositive. Denote $h(y) = e^{-t_1 V(y)}\Phi_{n-1}(y)$. Recall that $V^{\prime}(y) \geq 0$ for almost all $y \in (0,1)$. By induction hypothesis and by symmetry, we have respectively
$$
h^{\prime}(y) = \frac{d}{d y} \left(e^{-t_1 V(y)}\Phi_{n-1}(y) \right) = -t_1 V^{\prime}(y) e^{-t_1 V(y)}\Phi_{n-1}(y) + e^{-t_1 V(y)} \frac{d}{d y} \Phi_{n-1}(y) \leq 0
$$
and
\begin{align*}
h^{\prime}(y) = - h^{\prime}(-y)
\, ,
\end{align*}
for almost all $y \in (0,1)$. This and \eqref{eq:gauss} give for all $x \in (0,1)$, almost all $y \in (0,1)$ and $t_1, s_1 >0$
\begin{align*}
 e^{-\frac{(y-x)^2}{4s_1}} h^{\prime}(y)
+ e^{-\frac{(y+x)^2}{4s_1}} h^{\prime}(-y) 
 = e^{-\frac{(y-x)^2}{4s_1}} h^{\prime}(y)
- e^{-\frac{(y+x)^2}{4s_1}} h^{\prime}(y) 
 = \left( e^{-\frac{(y-x)^2}{4s_1}} - e^{-\frac{(y+x)^2}{4s_1}}\right)h^{\prime}(y) \leq 0
\, .
\end{align*}
Integrating this inequality on $[0,1]$, we get 
\begin{align*}
 \int_{-1}^1 e^{-\frac{(y-x)^2}{4s_1}} \frac{d}{d y} \left(e^{-t_1 V(y)}\Phi_{n-1}(y)\right) dy \leq 0
\, ,
\end{align*}
for $x \in (0,1)$, which ends the proof.
\end{proof}

\begin{lemma}
\label{lm:aproks}
Assume that $\alpha \in (0,2)$. Let $0<a<\infty$ and $V \in \cV^{\alpha}((-a,a))$. For $t>0$ denote
$$
g_t(x) = \ex^x\left[\exp\left(-\int_0^t V(X_s)ds\right);  t< \tau_{(-a,a)}\right], \ \ \ \ \ \ \ \ \ x \in (-a,a).
$$
Then for every fixed $t>0$ the functions $g_t(x)$ is nondecreasing in $(-a,0)$ and nonincreasing in $(0,a)$. 
\end{lemma}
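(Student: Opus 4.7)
The plan is to combine Lemma \ref{lm:midphi} with the subordination $X_s = B_{\eta_s}$ of the symmetric $\alpha$-stable process by an independent Brownian motion $(B_u)$ and $\alpha/2$-stable subordinator $(\eta_s)$. Translating $V$ by a constant (which multiplies $g_t$ by a positive factor and preserves monotonicity), I may assume $V \geq 0$, so that the Feynman-Kac weights are bounded by $1$ and dominated convergence is available throughout.

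For each $n \geq 1$, I would introduce the discrete-time approximation
\begin{align*}
g_t^{(n)}(x) := \ex^x\left[ e^{-(t/n)\sum_{i=1}^n V(X_{it/n})} \prod_{i=1}^n \mathbf{1}_{(-a,a)}(X_{it/n}) \right].
\end{align*}
Conditioning on the i.i.d.\ subordinator increments $\Delta_i := \eta_{it/n} - \eta_{(i-1)t/n}$ (which have density $g_{\alpha/2}(t/n,\cdot)$) and using $X_{it/n} = B_{\eta_{it/n}}$, one obtains
\begin{align*}
g_t^{(n)}(x) = \int_{(0,\infty)^n} \Phi_n(x; s_1,\ldots,s_n, t/n,\ldots,t/n) \prod_{i=1}^n g_{\alpha/2}(t/n,s_i)\, ds_1 \cdots ds_n.
\end{align*}
Since $V$ satisfies the hypotheses of Lemma \ref{lm:midphi} (after truncating from above and letting the truncation tend to $V$, if $V$ is unbounded near $\pm a$), the integrand is, for every fixed $(s_1,\ldots,s_n)$, nondecreasing in $x$ on $(-a,0)$ and nonincreasing on $(0,a)$; integration against the nonnegative product density transfers this monotonicity to $g_t^{(n)}$.

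I would then pass to the limit along the subsequence $n_k = k!$, which makes the events $A_k := \{X_{it/k!} \in (-a,a) : 1 \leq i \leq k!\}$ decreasing. Since $X$ is c\`adl\`ag and $\pr^x(X_s = \pm a) = 0$ for every fixed $s$ (by continuity of the transition density), almost surely $\bigcap_k A_k = \{X_s \in [-a,a]\ \forall s \in [0,t]\}$. Combined with the Riemann-sum convergence $(t/k!)\sum_i V(X_{it/k!}) \to \int_0^t V(X_s)\,ds$ (valid where $V \circ X$ is bounded, which exhausts the relevant event as the truncation is removed) and dominated convergence, $g_t^{(k!)}(x)$ converges to $\ex^x[\exp(-\int_0^t V(X_s)\,ds);\, X_s \in [-a,a]\ \forall s \in [0,t]]$.

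The main obstacle is to identify this limit with $g_t(x)$, since the difference consists of paths touching $\{-a,a\}$ at some time while remaining in $[-a,a]$. By symmetry, $\pr^a(X_\epsilon > a) = 1/2$ for every $\epsilon > 0$, so Blumenthal's $0$--$1$ law applied to the germ $\sigma$-field gives $\pr^a(\inf\{s > 0 : X_s > a\} = 0) = 1$, and analogously at $-a$. Applying the strong Markov property at the first hitting time of $\{-a,a\}$, and noting that $\pr^x(X_t \in \{-a,a\}) = 0$ disposes of the degenerate case in which the hit occurs at $s_0 = t$, one concludes that on any path with $X_{s_0} \in \{-a,a\}$ for some $s_0 \leq t$ there almost surely exists $s \in (s_0, t]$ with $X_s \notin [-a,a]$. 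Hence the exceptional paths form a null set, the limit equals $g_t(x)$, and $g_t$ inherits the monotonicity of the $g_t^{(k!)}$.
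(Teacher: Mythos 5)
Your proof follows the paper's strategy in all essentials: discretize in time, express the finite-dimensional expectation via the subordination representation as an average of the functionals $\Phi_n$ of Lemma~\ref{lm:midphi}, and pass to the limit, identifying the limiting event with $\{t < \tau_{(-a,a)}\}$ up to a null set. The only differences are cosmetic --- you take the nested factorial grids $k!$ where the paper takes dyadic grids $2^n$, and you re-derive the fact $\pr^x(X_{\tau_{(-a,a)}} \in \{-a,a\}) = 0$ from Blumenthal's zero--one law and the strong Markov property where the paper invokes it directly --- except that the paper's ordering of limits (first truncate $V_k = V \wedge k$, prove the lemma for each bounded $V_k$, then let $k \to \infty$ by monotone and bounded convergence) disposes of the Riemann-sum convergence for unbounded $V$ more cleanly than your parenthetical remark does.
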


\begin{proof}
Let $V_k(x) = V(x) \wedge k$, $x \in (-a,a)$, $k \geq 1$. First note that for every $x \in (-a,a)$ and $t>0$ 
$$
\int_0^t V_k(X_s)ds \rightarrow \int_0^t V(X_s)ds
$$
as $k \to \infty$, $\pr^x$-almost surely, by the monotone convergence theorem. Thus for all $x \in (-a,a)$ and $t > 0$ we have 
{\small$$
\ex^x\left[\exp\left(-\int_0^t V(X_s)ds\right);  t< \tau_{(-a,a)}\right] = \lim_{k \to \infty} \ex^x\left[\exp\left(-\int_0^t V_k(X_s)ds\right);  t< \tau_{(-a,a)}\right]
$$}
as a consequence of the fact that
$$
\exp\left(-\int_0^t V_k(X_s)ds\right) \leq \exp(-(\inf V \wedge 1) t), \ \ \ \ \ \ t >0, \ \ \ k \geq 1,
$$
and the bounded convergence theorem. Thus, it suffices to show that for any $k \geq 1$
$$
\ex^x\left[\exp\left(-\int_0^t V_k(X_s)ds\right);  t< \tau_{(-a,a)}\right]
$$ 
is nondecreasing in $(-a,0)$ and nonincreasing in $(0,a)$ as a function of variable $x$, for each fixed $t >0$. 

Fix now $k \geq 1$. By using the fact that $\pr^x(X_{\tau_{(-a,a)}} \in \left\{-a, a\right\}) = 0$ for $x \in (-a,a)$ and the paths are c\`adl\`ag, the boundedness and continuity of $V_k$, and the Markov property of the process $(X_t)_{t \geq 0}$, we have
\begin{align*}
\ex^x & \left[\exp\left(-\int_0^t V_k(X_s)ds\right); t< \tau_{(-a,a)}\right] \\ & = \ex^x\left[\exp\left(-\int_0^t V_k(X_s)ds\right); X_s \in (-a,a), \forall 0\leq s\leq t\right] \\
                          & = \lim_{n \rightarrow \infty}  \ex^x\left[\exp\left(-\frac{t}{2^n} \sum_{i=1}^{2^n} V_k(X_{it/2^n})\right); X_{it/2^n} \in (-a,a), i=1,2,...,2^n\right]\\
                          & = \lim_{n \rightarrow \infty} \int_{-a}^a ... \int_{-a}^a \prod_{i=1}^{2^n} \exp\left(- \frac{t}{2^n} V_k(x_i)\right) p(t/2^n, x_i-x_{i-1})dx_1 ...dx_{2^n},
\end{align*}
where $x_0 = x$. Moreover, by the subordination formula \eqref{eq:subfor} and Fubini's theorem, the last multiple integral can be rewritten as 
\begin{equation}
\begin{split}
\label{eq:lett}
                          & \int_{-\infty}^{\infty} ... \int_{-\infty}^{\infty} \left(\int_{-a}^a ... \int_{-a}^a \prod_{i=1}^{2^n} \exp\left(- \frac{t}{2^n} V_k(x_i)\right) q(s_i, x_i-x_{i-1})dx_1 ...dx_{2^n} \right) \\
                          & \ \times \prod_{i=1}^{2^n} g_{\alpha/2}(t/2^n, s_i)ds_1 ... ds_{2^n} \\
                          & = \int_{-\infty}^{\infty} ... \int_{-\infty}^{\infty} \Phi_{2^n}(x;s_1,...,s_{2^n},t/2^n,...,t/2^n) \prod_{i=1}^{2^n} g_{\alpha/2}(t/2^n, s_i)ds_1 ... ds_{2^n}
\, ,
\end{split}
\end{equation}
where
$$
\Phi_{2^n}(x;s_1,...,s_{2^n},t_1,...,t_{2^n}):= \int_{-a}^a ... \int_{-a}^a \prod_{i=1}^{2^n} \exp\left(- t_i V_k(x_i)\right) q(s_i, x_i-x_{i-1})dx_1 ...dx_{2^n}, 
$$ 
with $x_0=x$. By Lemma \ref{lm:midphi} we obtain that for every natural $n$ and for any positive parameters $t_i, s_i$, $i=1,...,2^n$, the function $\Phi_{2^n}$ is nondecreasing in $(-a,0)$ and nonincreasing in $(0,a)$. Letting now $n \to \infty$ in \eqref{eq:lett}, we conclude that the same is true for $\ex^x\left[\exp\left(-\int_0^t V_k(X_s)ds\right);  t< \tau_{(-a,a)}\right]$, for any $k \geq 1$ and for each fixed $t>0$. Thus the proof is complete.  
\end{proof}

\begin{proof}[Proof of Theorem \ref{th:gsp}]
Let $0<a<\infty$ and $V \in \cV^{\alpha}((-a,a))$ be fixed. First we show a symmetry. Suppose contrary that $\varphi_1$ is not symmetric. Thus $\widehat \varphi_1(x) := \varphi_1(x) + \varphi_1(-x)$ is also an eigenfunction of $-L$ corresponding to the eigenvalue $\lambda_1$ such that $\widehat \varphi_1 \notin \spann(\varphi_1)$. This gives a contradiction, because $\lambda_1$ has muliplicity one. Thus $\varphi_1$ is symmetric in $(-a,a)$.

Let now $P_{\varphi_1}:L^2((-a,a)) \rightarrow L^2((-a,a))$ be the projection onto $\spann(\varphi_1)$. By the fact that $e^{\lambda_1 t} T_t P_{\varphi_1} = P_{\varphi_1} e^{\lambda_1 t} T_t = P_{\varphi_1}$, we have for $t > 2$
\begin{align*}
\sup_{x \in (-a,a)}\left|(e^{\lambda_1 t} \ T_t - P_{\varphi_1})\1_{(-a,a)}(x) \right| & \leq 2 a \left\|e^{\lambda_1 t} \ T_t - P_{\varphi_1} \right\|_{1,\infty} \\ & \leq 2 a e^{2 \lambda_1} \left\|T_1\right\|_{2, \infty }\left\|e^{\lambda_1 (t-2)} \ T_{t-2} - P_{\varphi_1} \right\|_{2}\left\|T_1\right\|_{1, 2}.
\end{align*}
Since by the spectral theorem 
$$
\left\|e^{\lambda_1 t} \ T_t - P_{\varphi_1} \right\|_{2} \leq e^{-\left(\lambda_2 - \lambda_1\right)t},
$$
we obtain 
{\small
$$
\varphi_1(x) = \lim_{t \to \infty} C_V \ e^{\lambda_1 t} \ T_t \1_{(-a,a)}(x) = \lim_{t \to \infty} C_V \ e^{\lambda_1 t} \ \ex^x\left[\exp\left(-\int_0^t V(X_s)ds\right);  t< \tau_{(-a,a)}\right], 
$$
}
uniformly in $x \in (-a,a)$, with $C_V = \left(\left\|\varphi_1\right\|_{1}\right)^{-1}$. Now the assertion of the theorem follows from Lemma \ref{lm:aproks}.
\end{proof}
  
We need the following auxiliary lemma. It is a version of \cite[Lemma 5.2]{bib:BNK} and \cite[Lemma 10]{bib:BJ}.

\begin{lemma}
\label{lm:Greender}
Let $\alpha \in (1,2)$. Let $-\infty<a<b<\infty$ and let $f \in L^1((a,b))$ be a function such that for every interval $[c,d] \subset (a,b)$ we have $\sup_{x \in [c,d]} |f(x)| < \infty$.
Then
\begin{align}
\label{eq:Gr1}
\frac{d}{dx} G_{(a,b)} f(x) = \int_a^b \frac{\partial}{\partial x} G_{(a,b)}(x,y) f(y) dy, \ \ \ \ \ \ \ x \in (a,b),
\end{align}
and for every interval $[c,d] \subset (a,b)$ there is a constant $C_{\alpha,f,a,b,c,d} < \infty$ such that 
\begin{align}
\label{eq:Gr2}
\left|\frac{d}{dx} G_{(a,b)} f(x)\right| \leq C_{\alpha,f,a,b,c,d}, \ \ \ \ \ \ \ \ \ x \in [c,d].
\end{align}
\end{lemma}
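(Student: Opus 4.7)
The plan is to reduce the lemma to a dominated-convergence argument on difference quotients of $G_{(a,b)}$ in the $x$-variable, using the classical explicit formula for the Green function of the killed $\alpha$-stable process on an interval. First I would invoke the Riesz/Blumenthal--Getoor--Ray representation, which expresses $G_{(a,b)}(x,y)$ as an elementary function of the distances $x-a$, $b-x$, $y-a$, $b-y$ and $|x-y|$. Direct differentiation of that representation gives the explicit pointwise derivative $\frac{\partial}{\partial x} G_{(a,b)}(x,y)$ for all $x \neq y$ in $(a,b)$, and one can verify from the formula that this derivative is continuous in $x$ on $(a,b) \setminus \{y\}$.

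The key quantitative step is to extract from this explicit formula a pointwise estimate of the form
$$\left|\frac{\partial}{\partial x} G_{(a,b)}(x,y)\right| \leq C_{\alpha,a,b,c,d}\bigl(|x-y|^{\alpha-2} + 1\bigr), \qquad x \in [c,d],\ y \in (a,b)\setminus\{x\}.$$
This is where the assumption $\alpha > 1$ is used essentially: since $\alpha - 2 > -1$, the factor $|x-y|^{\alpha-2}$ is locally integrable in $y$, so the right-hand side is integrable over $(a,b)$ uniformly in $x \in [c,d]$.

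To pass from this pointwise bound to \eqref{eq:Gr1} and \eqref{eq:Gr2}, I would fix a slightly enlarged subinterval $[c',d']$ with $a < c' < c \leq d < d' < b$, split the integral over $(a,b)$ into the contributions from $[c',d']$ and its complement, and construct an $L^1((a,b))$ majorant of $|f(y)\,\partial_x G_{(a,b)}(x,y)|$ that is independent of $x \in [c,d]$. On $[c',d']$ the hypothesis gives $\sup_{y \in [c',d']}|f(y)| < \infty$, so the previous estimate yields a dominating function $C\bigl(|x-y|^{\alpha-2} + 1\bigr)$ which is integrable in $y$. On the complement the distance $|x-y|$ is bounded below for $x \in [c,d]$, so $|\partial_x G_{(a,b)}(x,y)|$ is uniformly bounded there and the integration is controlled by $\|f\|_{L^1((a,b))}$. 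Identity \eqref{eq:Gr1} then follows from dominated convergence applied to the difference quotient $h^{-1}(G_{(a,b)}(x+h,y) - G_{(a,b)}(x,y))$, which by the mean value theorem equals $\partial_x G_{(a,b)}(\xi_h,y)$ for some $\xi_h$ between $x$ and $x+h$, hence is dominated by the same majorant as soon as $|h|$ is so small that $\xi_h$ stays in $[c',d']$. Integrating the majorant against $|f|$ yields the uniform bound \eqref{eq:Gr2}.

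The main obstacle is the derivation of the pointwise estimate on $\partial_x G_{(a,b)}(x,y)$ with the correct singularity $|x-y|^{\alpha-2}$; this is the nontrivial computation, and it is the precise point where the restriction $\alpha \in (1,2)$ enters the argument (for $\alpha \leq 1$ the singularity would fail to be integrable). Once this estimate is in hand, the remaining steps are a routine dominated convergence argument, entirely analogous to the ones carried out in \cite{bib:BNK, bib:BJ}.
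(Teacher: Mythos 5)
Your overall strategy---decompose the Green function, show a pointwise bound $|\partial_x G_{(a,b)}(x,y)|\le C(|x-y|^{\alpha-2}+1)$ on compacts, exploit that $\alpha-2>-1$ makes this locally integrable, and then pass to the limit in the difference quotient---is precisely the strategy of the paper's proof, which works with $G_{(a,b)}(x,y)=K^{(\alpha)}(x-y)-H(x,y)$ and cites \cite{bib:BNK} for the gradient estimates of $K^{(\alpha)}$ and $H$. However, the last step as you state it has a genuine gap. You propose to dominate the difference quotient by writing, via the mean value theorem, $h^{-1}\bigl(G_{(a,b)}(x+h,y)-G_{(a,b)}(x,y)\bigr)=\partial_x G_{(a,b)}(\xi_h,y)$ and then absorbing this into the majorant $C(|x-y|^{\alpha-2}+1)$. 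This fails in a neighbourhood of the diagonal. First, when $y$ lies between $x$ and $x+h$ the mean value theorem does not apply at all, because $G_{(a,b)}(\cdot,y)$ has a cusp at $y$ (its $x$-derivative blows up like $|x-y|^{\alpha-2}$). Second, even for $y$ just outside $[x,x+h]$, say $y=x+h+\epsilon$ with $\epsilon\ll h$, the intermediate point $\xi_h$ can be at distance $\epsilon$ from $y$, so $|\partial_x G_{(a,b)}(\xi_h,y)|\sim\epsilon^{\alpha-2}$, which is much larger than your proposed majorant $|x-y|^{\alpha-2}\sim h^{\alpha-2}$. Hence there is no $h$-independent $L^1$ dominating function near $y=x$, and the dominated convergence theorem cannot be invoked as stated.

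The paper avoids this by an explicit two-scale argument rather than blanket domination: it uses the estimate $|F^{(1)}_h(x,y)|\le C_\alpha\bigl(|x+h-y|^{\alpha-2}\vee|x-y|^{\alpha-2}\bigr)$ (from the proof of Lemma 5.2 in \cite{bib:BNK}), chooses a small $\beta>0$ so that $\int_{|y-x|<\beta\delta(x)}\bigl(|x+h-y|^{\alpha-2}+|x-y|^{\alpha-2}\bigr)\,dy<\varepsilon$ uniformly in $|h|$, and applies bounded convergence only on the region $|y-x|\ge\beta\delta(x)$, where the integrand is uniformly bounded. In effect this is a uniform-integrability (Vitali) argument, not a dominated-convergence one. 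Your sketch would become correct if you replace the MVT/DCT step with either this $\varepsilon$-$\beta$ splitting, or an equivalent decomposition at scale $|y-x|\sim|h|$ showing the near-diagonal contribution is $O(|h|^{\alpha-1})$. The rest of your argument---the local boundedness of $f$ on $[c',d']$, the $L^1$ control on the complement, and the derivation of \eqref{eq:Gr2} from the pointwise bound---is sound and matches the paper.
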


\begin{proof}
Recall that $\alpha \in (1,2)$. By \cite[Formula (5)]{bib:BNK} we have
$$
G_{(a,b)}(x,y) = K^{(\alpha)}(x-y) - H(x,y), \ \ \ \ \ \ \ \ x, y \in (a,b), \ x \neq y,
$$
where $K^{(\alpha)}(x-y) = (2\Gamma(\alpha) \cos(\pi\alpha/2))^{-1} |x-y|^{\alpha-1}$ and $H(x,y) = \ex^x \left[K^{(\alpha)}(X_{\tau_{(a,b)}} - y)\right]$. In view of this equality we have 
\begin{align*}
\frac{d}{dx} G_{(a,b)} f(x) = & \lim_{h \to 0} \int_a^b \frac{K^{(\alpha)}(x+h-y) - K^{(\alpha)}(x-y)}{h}f(y)dy \\
                            & + \lim_{h \to 0} \int_a^b \frac{H(x+h-y) - H(x-y)}{h}f(y)dy, \ \ \ \ \ \ x \in (a,b).
\end{align*}
First notice that both partial derivatives $\frac{\partial}{\partial x} K^{(\alpha)}(x-y)$, $x \neq y$, and $\frac{\partial}{\partial x} H(x,y)$ exist (see (10) in \cite{bib:BNK}). For $x \in (a,b)$ denote $\delta(x) = (b-x) \wedge (x-a)$. From \cite[Lemma 3.2]{bib:BNK} we have $\left|\frac{\partial}{\partial x} H(x,y)\right| \leq C_{\alpha,a,b} \delta(x)^{-1}$. It follows that 
\begin{equation}
\begin{split}
\label{eq:derbound}
\left|\frac{\partial}{\partial x} G_{(a,b)}(x,y)\right| & \leq \left|\frac{\partial}{\partial x} K^{(\alpha)}(x-y)\right| + \left|\frac{\partial}{\partial x} H(x,y)\right| \\
& \leq C_{\alpha}  |x-y|^{\alpha-2} + C_{\alpha,a,b} \delta(x)^{-1}, \ \ \ \ \ \ \  x, y \in (a,b), \ x \neq y.
\end{split}
\end{equation}
This inequality and properties of $f$ imply that for each fixed $x \in (a,b)$ the integral on the right hand side of \eqref{eq:Gr1} is absolutely convergent. Thus, to obtain \eqref{eq:Gr1} it is enough to show that 
\begin{align}
\label{eq:eqq1}
\lim_{h \to 0} \int_a^b |F^{(1)}_h(x,y)||f(y)|dy + \lim_{h \to 0} \int_a^b  |F^{(2)}_h(x,y)||f(y)|dy = 0, \ \ \ \ \ \ x \in (a,b),
\end{align}
where
$$
F^{(1)}_h(x,y) = \frac{K^{(\alpha)}(x+h-y) - K^{(\alpha)}(x-y)}{h} - \frac{\partial}{\partial x} K^{(\alpha)}(x-y),
$$
$$
F^{(2)}_h(x,y) = \frac{H(x+h-y) - H(x-y)}{h} - \frac{\partial}{\partial x} H(x,y).
$$
Fix now $x \in (a,b)$. Let $|h|< \delta(x)/4$. From \cite[Lemma 3.2]{bib:BNK} and Lagrange's theorem we obtain
\begin{align}
\label{eq:eqq3}
|F^{(2)}_h(x,y)| \leq C_{\alpha,a,b} \delta(x)^{-1}.
\end{align}
This estimate and the fact that $f \in L^1((a,b))$ give that 
$$
\lim_{h \to 0} \int_a^b  |F^{(2)}_h(x,y)||f(y)|dy = 0
$$ 
by the dominated convergence theorem. 

It suffices to show that
\begin{align}
\label{eq:twostars}
\lim_{h \to 0} \int_a^b  |F^{(1)}_h(x,y)||f(y)|dy = 0
\end{align}
for each fixed $x \in (a,b)$. Let $\beta \in (0,1/2)$ and 
\begin{equation}
\begin{split}
\label{eq:eqq4}
\int_a^b |F^{(1)}_h(x,y)||f(y)|dy = & \int_{(x-\beta \delta(x),x+\beta \delta(x))} |F^{(1)}_h(x,y)||f(y)| dy \\ & + \int_{(a,b)\cap(x-\beta \delta(x),x+\beta \delta(x))^c}|F^{(1)}_h(x,y)||f(y)| dy .
\end{split}
\end{equation}
Fix $x \in (a,b)$ and $\varepsilon > 0$. We will show that for sufficiently small $|h|$ the left hand side of \eqref{eq:eqq4} is smaller than $\varepsilon$. Let $[c,d] \subset (a,b)$ be such that $x \in (c,d)$. Denote $M = \sup_{x \in [c,d]} |f(x)|$. Let $\beta$ be small enough so that $(x-\beta \delta(x), x+\beta \delta(x)) \subset [c,d]$. It is known (see \cite[proof of Lemma 5.2]{bib:BNK}) that 
\begin{align}
\label{eq:eqq2}
|F^{(1)}_h(x,y)| \leq C_{\alpha} \left(|x+h-y|^{\alpha-2} \vee |x-y|^{\alpha-2}\right), \ \ \ \ \ \ y \in (a,b), \ y \neq x, \ y \neq x+h.
\end{align}
Hence for any $h \in \R$
\begin{align*}
\int_{x-\beta \delta(x)}^{x+\beta \delta(x)} |F^{(1)}_h(x,y)||f(y)|dy & \leq M C_{\alpha} \int_{x-\beta \delta(x)}^{x+\beta \delta(x)} \left(|x+h-y|^{\alpha-2} + |x-y|^{\alpha-2}\right) dy \\ & \leq 2 M C_{\alpha} \int_{-\beta \delta(x)}^{\beta \delta(x)} |y|^{\alpha - 2} dy.
\end{align*}
It is clear that there exists $\beta$ small enough so that the above integral is smaller than $\varepsilon/2$. Let us fix such $\beta$. Clearly, $F^{(1)}_h(x,y) \to 0$ as $h \to 0$ for any $x \neq y$. By \eqref{eq:eqq2}
$$
|F^{(1)}_h(x,y)| \leq C_{\alpha} (2 \beta^{-1} \delta(x)^{-1} \vee 1) ,
$$
for $y \in (a,b)\cap(x-\beta \delta(x),x+ \beta \delta(x))^c$ and $h \in (- \beta \delta(x)/2, \beta \delta(x)/2)$. Since $f \in L^1((a,b))$, the second integral on the right hand side of  \eqref{eq:eqq4} tends to $0$ as $h$ tends to $0$ by the bounded convergence theorem. Hence for $|h|$ sufficiently small the second integral on the right hand side of \eqref{eq:eqq4} is smaller than $\varepsilon/2$. This finishes the proof \eqref{eq:twostars}. Thus \eqref{eq:Gr1} is proved. The boundedness property \eqref{eq:Gr2} is a simple consequence of the estimate \eqref{eq:derbound} and the properties of $f$. 
\end{proof}

\begin{proof}[Proof of Theorem \ref{th:gsp1}]
Let $-\infty<a<b<\infty$ and $V \in \cV^{\alpha}((a,b))$. The starting point of the proof are the eigenequations
\begin{align}
\label{eq:eigeneq}
T_t \;\varphi_n = e^{-\lambda_n t} \; \varphi_n, \ \ \ \ \ \ n \geq 1.
\end{align}
Since we do not exclude the case that $V$ is signed potential, it may happen that $\lambda_n < 0$ for finitely many $n$. Put
$$
\eta= 
\left\{
\begin{array}{ll} 0, & \text{if} \ \ \ \ \inf V >0,\\
1,                 & \text{if} \ \ \ \  \inf V = 0, \\
-2 \inf V,    & \text{if} \ \ \ \ \inf V < 0.
\end{array} \right. 
$$
Denote $V_{\eta} =  V + \eta$. Then $V_{\eta} > 0$ and $((a,b),V_{\eta})$ is gaugeable (see p. 7). By \eqref{eq:eigeneq} we clearly have
$$
e^{-(\lambda_n + \eta) t} \; \varphi_n(x) = e^{-\eta t} T_t \;\varphi_n(x) = \ex^x \left[e^{-\int_0^t V_{\eta}(X_s)ds}\varphi_n(X_t); \tau_{(a,b)} > t \right], \ \ \ \ \ x \in (a,b).
$$
Using the fact that $\lambda_n + \eta >0$, $n \geq 1$, and integrating over $t$ the above equations we obtain
$$
\varphi_n(x) = (\lambda_n + \eta) G^{V_{\eta}}_{(a,b)} \varphi_n(x), \ \ \ x \in (a,b), \ n\geq 1.
$$ 
Applying now the perturbation formula \eqref{eq:pertform} to this equality we get
$$
\varphi_n(x) = (\lambda_n + \eta) G_{(a,b)} \varphi_n(x) + G_{(a,b)} (V_{\eta} \varphi_n)(x), \ \ \ x \in (a,b), \ n\geq 1,
$$
which can be rewritten as
$$
\varphi_n(x) = (\lambda_n + \eta)  \int_{a}^b G_{(a,b)}(x,y) \varphi_n(y) dy  + \int_{a}^b G_{(a,b)}(x,y) V_{\eta}(y) \varphi_n(y)dy.
$$
Since $\left\|\varphi_n\right\|_{\infty}<\infty$, $V_{\eta} \in L^1((a,b))$ and is continuous in $(a,b)$, the assumptions of Lemma \ref{lm:Greender} are satisfied. Thus for $x \in (a,b)$ we have
$$
\frac{d}{dx} \varphi_n(x) = (\lambda_n + \eta)  \int_{a}^b \frac{\partial}{\partial x}G_{(a,b)}(x,y) \varphi_n(y) dy  + \int_{a}^b \frac{\partial}{\partial x} G_{(a,b)}(x,y) V_{\eta}(y) \varphi_n(y)dy.
$$
A direct consequence of Lemma \ref{lm:Greender} is that also for any interval $[c,d] \subset (a,b)$ there is a constant $C_{V,\alpha,n,a,b,c,d}$ such that for all $x \in [c,d]$ we have
$$
\left|\frac{d}{dx} \varphi_n(x)\right| \leq C_{V,\alpha,n,a,b,c,d}.
$$
\end{proof}

\section{Lower bound for $\lambda_{*} - \lambda_1$}

\begin{proof}[Proof of Theorem \ref{th:sga}]
Let $0<a<\infty$. With no loss of generality we provide the arguments for the symmetric interval $(-a,a)$ only. Let $V \in \cV^{\alpha}((-a,a))$. Recall that our orthonormal basis $\left\{\varphi_n\right\}$ is chosen so that $\varphi_n$ are either symmetric or antisymmetric. Let $n_0$ be the smallest natural number such that $\varphi_{n_0}$ is antisymmetric in $(-a,a)$. Thus $\varphi_{*} = \varphi_{n_0}$. Let $f = \varphi_{*} / \varphi_1 = \varphi_{n_0} / \varphi_1$. For every $\varepsilon \in (0,a)$ we have
\begin{align*}
\int_{-a}^a \int_{-a}^a \frac{(f(x)-f(y))^2}{|x-y|^{1+\alpha}} \varphi_1(x) \varphi_1(y) dx dy & \geq \int_{\varepsilon}^a \int_{\varepsilon}^a \frac{(f(x)-f(y))^2}{|x-y|^{1+\alpha}} \varphi_1(x) \varphi_1(y) dx dy \\
& + \int_{\varepsilon}^a \int_{-a}^{-\varepsilon} \frac{(f(x)-f(y))^2}{|x-y|^{1+\alpha}} \varphi_1(x) \varphi_1(y) dx dy \\
& + \int_{-a}^{-\varepsilon} \int_{-a}^{-\varepsilon} \frac{(f(x)-f(y))^2}{|x-y|^{1+\alpha}} \varphi_1(x) \varphi_1(y) dx dy \\
& + \int_{-a}^{-\varepsilon} \int_{\varepsilon}^a \frac{(f(x)-f(y))^2}{|x-y|^{1+\alpha}} \varphi_1(x) \varphi_1(y) dx dy.
\end{align*}
Simple changes of variables in the last three integrals and the fact that $f$ is antisymmetric give that the last sum can be transformed to 
\begin{align*}
2 \int_{\varepsilon}^a \int_{\varepsilon}^a \left(\frac{(f(x)-f(y))^2}{|x-y|^{1+\alpha}} + \frac{(f(x)+f(y))^2}{(x+y)^{1+\alpha}}\right) \varphi_1(x) \varphi_1(y) dx dy.
\end{align*}
Clearly, this is bigger or equal to
\begin{align*} 
2 \int_{\varepsilon}^a \int_{\varepsilon}^a \frac{(f(x)-f(y))^2 + (f(x)+f(y))^2}{(x+y)^{1+\alpha}} & \varphi_1(x) \varphi_1(y) dx dy \\
& = 4 \int_{\varepsilon}^a \int_{\varepsilon}^a \frac{f^2(x) + f^2(y)}{(x+y)^{1+\alpha}} \varphi_1(x) \varphi_1(y) dx dy,
\end{align*}
which, by symmetry, is equal to 
$$
8 \int_{\varepsilon}^a \int_{\varepsilon}^a \frac{f^2(x)}{(x+y)^{1+\alpha}} \varphi_1(x) \varphi_1(y) dx dy.
$$
Thus, by Theorem \ref{th:gsp}, we have 
\begin{align*}
\int_{-a}^a \int_{-a}^a \frac{(f(x)-f(y))^2}{|x-y|^{1+\alpha}} \varphi_1(x) \varphi_1(y) dx dy & \geq 8 \int_{\varepsilon}^a \int_{\varepsilon}^a \frac{f^2(x)}{(x+y)^{1+\alpha}} \varphi_1(x) \varphi_1(y) dy dx \\
& \geq 8 \int_{\varepsilon}^a \int_{\varepsilon}^x \frac{f^2(x)}{(x+y)^{1+\alpha}} \varphi_1(x) \varphi_1(y) dy dx \\
& \geq 8 \int_{\varepsilon}^a \int_{\varepsilon}^x dy \frac{f^2(x)}{(2x)^{1+\alpha}} \varphi_1^2(x) dx  \\
& \geq \frac{4}{(2a)^{\alpha}} \int_{\varepsilon}^a  \frac{x-\varepsilon}{x}f^2(x) \varphi_1^2(x) dx.
\end{align*}
Now, letting $\varepsilon \rightarrow 0$, we obtain
\begin{align*}
\int_{-a}^a \int_{-a}^a \frac{\left(f(x)-f(y)\right)^2}{|x-y|^{1+\alpha}} & \varphi_1(x) \varphi_1(y) dx dy \\  \geq & \frac{4}{(2a)^{\alpha}} \int_0^a  f^2(x)\varphi_1^2(x) dx = \frac{2}{(2a)^{\alpha}} \int_{-a}^a  f^2(x)\varphi_1^2(x) dx =\frac{2}{(2a)^{\alpha}}.
\end{align*}
Since $f = \varphi_{*}/ \varphi_1$ is antisymmetric, the assertion of Theorem \ref{th:sga} follows simply from Proposition \ref{prop:var}.

\end{proof}

\section{Crucial inequality}
\label{sec:crucineq}
\begin{proof}[Proof of Theorem \ref{th:fpi}]
If $f(a)=f(b)=0$, then the inequality \eqref{eq:fpi} is trivial. We will only consider the case $f(a)=0$, $f(b) \neq 0$. The case $f(a)\neq0$, $f(b)= 0$, is similar. Without loosing of generality we may and do assume that $f(b) > 0$. For more clarity we divide the proof into the two parts. 
\smallskip

\noindent
\textbf{(Part 1)} In this part we prove the theorem for $a=0$ and $b=1$. Observe that by the scaling property of the claimed inequality \eqref{eq:fpi}, we may assume that $f(1)=1$. Let $c < 1/2$ be a constant. It will be chosen later. Since our argument in this part is reccurent, it is divided into the several steps.
\smallskip

\noindent
\emph{(Step 1)} Denote 
$$
\left\{
\begin{array}{l} 
a_1=0                \\
b_1=1                  
\end{array} \right. , \ \ \ \ \ \ 
\left\{
\begin{array}{l} 
f_{a_1}=0              \\
f_{b_1}=1                  
\end{array} \right. , \ \ \ \ \ \ 
\left\{
\begin{array}{l} 
x_1=a_1 + c              \\
y_1=b_1 -c                  
\end{array} \right. , \ \ \ \ \ \ 
\left\{
\begin{array}{l} 
f_{x_1}=f_{a_1}+\frac{1}{3}              \\
f_{y_1}=f_{b_1} - \frac{1}{3}                 
\end{array} \right. ,
$$
and define 
$$
m_1 = \min\left\{x \in (a_1,b_1): f(x) = f_{x_1} \right\} \ \ \ \text{and} \ \ \ \ M_1 = \max \left\{x \in (a_1, b_1): f(x) = f_{y_1}\right\}.
$$
Clearly, $f_{y_1}-f_{x_1} = 1/3$. If $m_1 \notin (a_1,x_1)$ and $M_1 \notin (y_1, b_1)$, then we have 
\begin{align*}
\int_0^1 \int_0^1 \frac{(f(x)-f(y))^2}{|x-y|^{1+\alpha}}dxdy & \geq \int_{a_1}^{x_1} \int_{y_1}^{b_1} \frac{(f(x)-f(y))^2}{|x-y|^{1+\alpha}}dxdy \\ 
& \geq \frac{(f_{y_1}-f_{x_1})^2 (b_1-y_1)(x_1-a_1)}{|b_1-a_1|^{1+\alpha}} = \left(\frac{c}{3}\right)^2.
\end{align*}

If $m_1 \in (a_1,x_1)$ or $M_1 \in (y_1, b_1)$, we consider the next step. 
\smallskip

\noindent
\emph{(Step 2)} If $m_1 \in (a_1,x_1)$ let us take 
$$
\left\{
\begin{array}{l} 
a_2=a_1                \\
b_2=x_1                  
\end{array} \right. , \ \ \ \ \ \ 
\left\{
\begin{array}{l} 
f_{a_2}=f_{a_1}             \\
f_{b_2}=f_{x_1}                  
\end{array} \right. , \ \ \ \ \ \ 
\left\{
\begin{array}{l} 
x_2=a_2 + c^2              \\
y_2=b_2 - c^2                  
\end{array} \right. , \ \ \ \ \ \ 
\left\{
\begin{array}{l} 
f_{x_2}=f_{a_2}+ \left(\frac{1}{3}\right)^2             \\
f_{y_2}=f_{b_2} - \left(\frac{1}{3}\right)^2                 
\end{array} \right. .
$$
If $m_1 \notin (a_1,x_1)$ and $M_1 \in (y_1, b_1)$ let us take
$$
\left\{
\begin{array}{l} 
a_2=y_1                \\
b_2=b_1                  
\end{array} \right. , \ \ \ \ \ \ 
\left\{
\begin{array}{l} 
f_{a_2}=f_{y_1}             \\
f_{b_2}=f_{b_1}                  
\end{array} \right. , \ \ \ \ \ \ 
\left\{
\begin{array}{l} 
x_2=a_2 + c^2              \\
y_2=b_2 - c^2                  
\end{array} \right. , \ \ \ \ \ \ 
\left\{
\begin{array}{l} 
f_{x_2}=f_{a_2}+ \left(\frac{1}{3}\right)^2             \\
f_{y_2}=f_{b_2} - \left(\frac{1}{3}\right)^2                 
\end{array} \right. .
$$
Define 
$$
m_2 = \min\left\{x \in (a_2,b_2): f(x) = f_{x_2} \right\} \ \ \ \ \text{and} \ \ \ \ M_2 = \max \left\{x \in (a_2, b_2): f(x) = f_{y_2}\right\}.
$$
Clearly, $f_{y_2}-f_{x_2} = 1/3^2$, $b_2-a_2 = c$. When $m_2 \notin (a_2,x_2)$ and $M_2 \notin (y_2, b_2)$, we have 
\begin{align*}
\int_0^1 \int_0^1 \frac{(f(x)-f(y))^2}{|x-y|^{1+\alpha}}dxdy & \geq \int_{a_2}^{x_2} \int_{y_2}^{b_2} \frac{(f(x)-f(y))^2}{|x-y|^{1+\alpha}}dxdy \\ 
& \geq \frac{(f_{y_2}-f_{x_2})^2 (b_2-y_2)(x_2-a_2)}{|b_2-a_2|^{1+\alpha}} = \frac{c^4}{3^4 c^{1+\alpha}} = \left(\frac{c}{3}\right)^2  \frac{1}{3^2 c^{\alpha-1}}.
\end{align*}
If $m_2 \in (a_2,x_2)$ or $M_2 \in (y_2, b_2)$, we consider the next step. 
\smallskip

Suppose that after $n-1$ steps $m_{n-1} \in (a_{n-1},x_{n-1})$ or $M_{n-1} \in (y_{n-1}, b_{n-1})$. Then let us consider the next step.
\smallskip

\noindent
\emph{(Step n)} If $m_{n-1} \in (a_{n-1},x_{n-1})$ let us take 
$$
\left\{
\begin{array}{l} 
a_n=a_{n-1}                \\
b_n=x_{n-1}                  
\end{array} \right. , \ \ \ \ \ \ 
\left\{
\begin{array}{l} 
f_{a_n}=f_{a_{n-1}}             \\
f_{b_n}=f_{x_{n-1}}                  
\end{array} \right. , \ \ \ \ \ \ 
\left\{
\begin{array}{l} 
x_n=a_n + c^n              \\
y_n=b_n - c^n                  
\end{array} \right. , \ \ \ \ \ \ 
\left\{
\begin{array}{l} 
f_{x_n}=f_{a_n}+ \left(\frac{1}{3}\right)^n             \\
f_{y_n}=f_{b_n} - \left(\frac{1}{3}\right)^n                 
\end{array} \right. .
$$ 
If $m_{n-1} \notin (a_{n-1},x_{n-1})$ and $M_{n-1} \in (y_{n-1}, b_{n-1})$ let us take
$$
\left\{
\begin{array}{l} 
a_n=y_{n-1}                \\
b_n=b_{n-1}                  
\end{array} \right. , \ \ \ \ \ \ 
\left\{
\begin{array}{l} 
f_{a_n}=f_{y_{n-1}}             \\
f_{b_n}=f_{b_{n-1}}                  
\end{array} \right. , \ \ \ \ \ \ 
\left\{
\begin{array}{l} 
x_n=a_n + c^n              \\
y_n=b_n - c^n                  
\end{array} \right. , \ \ \ \ \ \ 
\left\{
\begin{array}{l} 
f_{x_n}=f_{a_n}+ \left(\frac{1}{3}\right)^n             \\
f_{y_n}=f_{b_n} - \left(\frac{1}{3}\right)^n                 
\end{array} \right. .
$$ 
Define 
$$
m_n = \min\left\{x \in (a_n,b_n): f(x) = f_{x_n} \right\} \ \ \ \text{and} \ \ \ M_n = \max \left\{x \in (a_n, b_n): f(x) = f_{y_n}\right\}.
$$
Of course, $f_{y_n}-f_{x_n} = 1/3^n$, $b_n-a_n = c^{n-1}$. If $m_n \notin (a_n,x_n)$ and $M_n \notin (y_n, b_n)$, then we have 
\begin{align*}
\int_0^1 \int_0^1 \frac{(f(x)-f(y))^2}{|x-y|^{1+\alpha}}&dxdy  \geq \int_{a_n}^{x_n} \int_{y_n}^{b_n} \frac{(f(x)-f(y))^2}{|x-y|^{1+\alpha}}dxdy \\ 
& \geq \frac{(f_{y_n}-f_{x_n})^2 (b_n-y_n)(x_n-a_n)}{|b_n-a_n|^{1+\alpha}} = \frac{c^{2n}}{3^{2n} c^{(n-1)(1+\alpha)}}  = \left(\frac{c}{3}\right)^2  \left(\frac{1}{3^2 c^{\alpha-1}}\right)^{n-1}.
\end{align*}
If $m_n \in (a_n,x_n)$ or $M_n \in (y_n, b_n)$, then we consider the $n+1$ step. 

Recalling that $\alpha \in (1,2)$ and choosing $c$ to be $9^{\frac{-1}{\alpha-1}}$, we obtain that
$$
\left(\frac{1}{3^2 c^{\alpha-1}}\right)^{n} = 1 \ \ \ \ \ \ \text{for all} \ \ \ n \geq 0.
$$
It is enough to see that there exists $n_0 \geq 1$ such that $m_{n_0} \notin (a_{n_0},x_{n_0})$ and $M_{n_0} \notin (y_{n_0}, b_{n_0})$. Indeed, in this case we have
\begin{align*}
\int_0^1 \int_0^1 \frac{(f(x)-f(y))^2}{|x-y|^{1+\alpha}}dxdy \geq \left(\frac{c}{3}\right)^2  \left(\frac{1}{3^2 c^{\alpha-1}}\right)^{n_0-1} = \left(\frac{c}{3}\right)^2 = \left(\frac{1}{9}\right)^{\frac{\alpha+1}{\alpha-1}} .
\end{align*} 

Suppose contrary that for all $n \geq 1$ we have $m_n \in (a_n,x_n)$ or $M_n \in (y_n, b_n)$. This means that there exists a decreasing sequence of intervals $\left\{(a_n,b_n)\right\}_{n \geq 1}$ such that $|b_n-a_n| = \left(\frac{1}{9}\right)^{\frac{n-1}{\alpha-1}}$. By the fact that $f$ is a Lipschitz function on $[a,b]$, there is a constant $C$ such that 
$$
\left(\frac{1}{3}\right)^n=|f(M_n) - f(m_n)| \leq C |M_n - m_n| \leq C |b_n-a_n| = C \left(\frac{1}{9}\right)^{\frac{n-1}{\alpha-1}}, \ \ \ \ n \geq 1,
$$
which gives a contradiction. Thus the inequality \eqref{eq:fpi} is true for $a=0$ and $b=1$ with constant $C^{(4)}_{\alpha} = \left(\frac{1}{9}\right)^{\frac{\alpha+1}{\alpha-1}}$, and the Part 1 is complete.
\smallskip

\noindent
\textbf{(Part 2)} We now prove the theorem for an arbitrary interval $(a,b)$, $-\infty < a < b < \infty$. First note that if $f$ is a Lipschitz function in the interval $[a,b]$ such that $f(a)=0$, then the function $\tilde f(x) := f((b-a)x + a)$ is a Lipschitz function in $[0,1]$ such that $\tilde f(0) = 0$. Thus, by trivial change of variables, we have
\begin{align*}
\int_a^b \int_a^b \frac{(f(x)-f(y))^2}{|x-y|^{1+\alpha}} dx dy & = \int_a^b \int_a^b \frac{\left(\tilde f\left(\frac{x-a}{b-a}\right)- \tilde f\left(\frac{y-a}{b-a}\right)\right)^2}{|x-y|^{1+\alpha}} dx dy \\
& = \frac{1}{(b-a)^{\alpha-1}} \int_0^1 \int_0^1 \frac{(\tilde f(x)- \tilde f(y))^2}{|x-y|^{1+\alpha}} dx dy .
\end{align*}
Since
$$
\int_0^1 \int_0^1 \frac{(\tilde f(x)- \tilde f(y))^2}{|x-y|^{1+\alpha}} dx dy \geq C^{(4)}_{\alpha} \tilde f^2 (1) = C^{(4)}_{\alpha} f^2 (b),
$$
by Part 1, the proof is complete.
\end{proof}
\noindent

\begin{proof}[Justification of Example \ref{ex:counter}]
We clearly have
\begin{align*}
\int_0^1 \int_0^1 \frac{(f_n(x)-f_n(y))^2}{|x-y|^{1+\alpha}}dxdy \leq \int_0^1 \int_0^{\frac{1}{2n}} \frac{(f_n(x)-f_n(y))^2}{|x-y|^{1+\alpha}}dxdy + \int_0^{\frac{1}{2n}} \int_0^1 \frac{(f_n(x)-f_n(y))^2}{|x-y|^{1+\alpha}}dxdy.
\end{align*}
By symmetry, the right hand side of the above inequality is equal to 
$$
2 \int_0^1 \int_0^{\frac{1}{2n}} \frac{(f_n(x)-f_n(y))^2}{|x-y|^{1+\alpha}}dxdy.
$$
Denote the last double integral by $J_n$. We have
$$
J_n \leq \sum_{l=0}^\infty \int_{\frac{2l}{n}}^{\frac{2l+2}{n}} \int_0^{\frac{1}{n}} \frac{(f_n(x)-f_n(y))^2}{|x-y|^{1+\alpha}}dxdy = \sum_{l=0}^\infty J_{n,l}.
$$
Recall that $f_n(x) = f(nx)$, where $f$ is a $C^{\infty}$-class function. Observe that for $x ,y \in [0,2]$ we have 
$$
|f_n(x) - f_n(y)| \leq \sup_{z \in [0,2]}|f_n^{\prime}(z)||x-y| = n \sup_{z \in [0,2]}|f^{\prime}(z)||x-y| \leq C n |x-y|.
$$
By this we obtain
$$
J_{n,0} = \int_0^{\frac{2}{n}} \int_0^{\frac{1}{n}} \frac{(f_n(x)-f_n(y))^2}{|x-y|^{1+\alpha}}dxdy \leq C^2 n^2 \int_0^{\frac{2}{n}} \int_{y-\frac{2}{n}}^{y+\frac{2}{n}} |x-y|^{1-\alpha} dxdy \leq C_{\alpha} n^{\alpha-1}.
$$
Similarly, for $l > 0$,
$$
J_{n,l} \leq \int_{\frac{2l}{n}}^{\frac{2l+2}{n}} \int_0^{\frac{1}{n}} \frac{1}{|x-y|^{1+\alpha}}dxdy \leq 2/n^2 \frac{n^{1+\alpha}}{(2l-1)^{1+\alpha}} \leq C_{\alpha} n^{\alpha-1} \left(\frac{1}{1+l}\right)^{1+\alpha}.
$$
Thus
$$
J_n \leq \sum_{l=0}^\infty J_{n,l} \leq C_{\alpha} n^{\alpha-1} \sum_{l=0}^\infty \left(\frac{1}{1+l}\right)^{1+\alpha} \leq C_{\alpha} n^{\alpha-1},
$$
and the proof is complete.
\end{proof}

\begin{proof}[Proof of Corollary \ref{cor:fwpi}]
Let $b_0 \in (a,b]$ be such that 
$$
f^2(b_0)g^2(b_0) = \max_{x \in (a,b]} f^2(x) g^2(x)
$$
We have  
\begin{align*}
\int_a^b \int_a^b \frac{(f(x)-f(y))^2}{|x-y|^{1+\alpha}} g(x) g(y) dx dy \geq g^2(b_0) \int_a^{b_0} \int_a^{b_0} \frac{(f(x)-f(y))^2}{|x-y|^{1+\alpha}}  dx dy, 
\end{align*}
which, by Theorem \ref{th:fpi}, is larger than 
\begin{align*}
\frac{C_{\alpha}^{(4)}}{(b_0-a)^{\alpha-1}} f^2(b_0) g^2(b_0) & \geq \frac{C_{\alpha}^{(4)}}{(b_0-a)^{\alpha-1}} \frac{1}{(b-a)} \int_a^b f^2(x) g^2(x) dx  = \frac{C_{\alpha}^{(4)}}{(b-a)^{\alpha}} \int_a^b f^2(x) g^2(x) dx.
\end{align*}
\end{proof}

\section{Spectral gap estimate}

\begin{proof}[Proof of Theorem \ref{th:sg}]
With no loss of generality we provide the arguments for the symmetric interval $(-a,a)$, $0<a<\infty$, only. Let $V \in \cV^{\alpha}((-a,a))$. Recall that the orthonormal basis $\left\{\varphi_n\right\}$ is chosen so that $\varphi_n$ are either symmetric or antisymmetric. If $\varphi_2$ is antisymmetric, then Theorem \ref{th:sg} follows from Theorem \ref{th:sga}. Assume now that $\varphi_2$ is symmetric. We directly deduce from Theorem \ref{th:gsp1} that the function $\varphi_2/ \varphi_1$ has a bounded derivative in each interval $[a_0,b_0]$, $-a<a_0<b_0<a$. Hence $\varphi_2/ \varphi_1$ is a Lipschitz function in each interval $[a_0,b_0] \subset (-a,a)$. Thus, by Proposition \ref{prop:var}, it is enough to estimate from below the double integral
$$
\int_{-a}^a \int_{-a}^a \frac{(f(x)-f(y))^2}{|x-y|^{1+\alpha}} \varphi_1(x) \varphi_1(y) dx dy, \ \ \ \ \ \ \ \ \text{with} \ \ \ \ \ \ f = \varphi_2 / \varphi_1.
$$ 
Note that $f$ is symmetric on $(-a,a)$, $f$ changes the sign in $(-a,a)$ and $\int_{-a}^a f^2(x) \varphi_1^2(x) dx = 1$. 

Let $a_0 = \min\left\{x \in [0,a): f(x) = 0\right\}$. Consider the following two cases.
\smallskip

\noindent
\emph{(Case 1)} Assume that 
$$
\int_{a_0}^a f^2(x) \varphi_1^2(x) dx \geq 1/4.
$$
We have
\begin{align*}
\int_{-a}^a \int_{-a}^a \frac{(f(x)-f(y))^2}{|x-y|^{1+\alpha}} \varphi_1(x) \varphi_1(y) dx dy 
& \geq \int_{a_0}^a \int_{a_0}^a  \frac{(f(x)-f(y))^2}{|x-y|^{1+\alpha}} \varphi_1(x) \varphi_1(y) dx dy \\
& \ \ \ + \int_{-a}^{-a_0} \int_{-a}^{-a_0}  \frac{(f(x)-f(y))^2}{|x-y|^{1+\alpha}} \varphi_1(x) \varphi_1(y) dx dy \\
& = 2 \int_{a_0}^a \int_{a_0}^a  \frac{(f(x)-f(y))^2}{|x-y|^{1+\alpha}} \varphi_1(x) \varphi_1(y) dx dy.
\end{align*}
Let now $b_0 \in [a_0,a)$ be such that $f^2(b_0)\varphi_1^2(b_0) = \max_{x \in (a_0,a)} f^2(x) \varphi_1^2(x)$. We have  
\begin{align*}
\int_{a_0}^a \int_{a_0}^a \frac{(f(x)-f(y))^2}{|x-y|^{1+\alpha}} \varphi_1(x) \varphi_1(y) dx dy \geq \varphi_1^2(b_0) \int_{a_0}^{b_0} \int_{a_0}^{b_0} \frac{(f(x)-f(y))^2}{|x-y|^{1+\alpha}}  dx dy, 
\end{align*}
which, by Theorem \ref{th:fpi}, is larger than 
\begin{align*}
\frac{C_{\alpha}^{(4)}}{(b_0-a_0)^{\alpha-1}} f^2(b_0) \varphi_1^2(b_0)  \geq \frac{C_{\alpha}^{(4)}}{(b_0-a_0)^{\alpha-1}} \frac{1}{(a-a_0)} \int_{a_0}^a f^2(x) \varphi_1^2(x) dx \geq \frac{1}{4}\frac{C_{\alpha}^{(4)}}{(a-a_0)^{\alpha}}.
\end{align*}
It follows that
$$
\int_{-a}^a \int_{-a}^a \frac{(f(x)-f(y))^2}{|x-y|^{1+\alpha}} \varphi_1(x) \varphi_1(y) dx dy \geq \frac{1}{2} \frac{C^{(4)}_{\alpha}}{(2a)^{\alpha}},
$$
which ends the proof in the first case.
\smallskip

\noindent
\emph{(Case 2)} Suppose now that
$$
\int_0^{a_0} f^2(x) \varphi_1^2(x) dx \geq 1/4.
$$
Notice that
\begin{equation}
\begin{split}
\label{eq:first}
\left(\int_{a_0}^a f(x) \varphi_1^2(x) dx \right)^2 \leq \int_{a_0}^a f^2(x) \varphi_1^2(x) dx \ \int_{a_0}^a \varphi_1^2(x)  dx  \leq \varphi_1^2(a_0)(a-a_0)\int_{a_0}^a f^2(x) \varphi_1^2(x) dx
\end{split}
\end{equation}
by Schwarz inequality and Theorem \ref{th:gsp}, and 
\begin{align}
\label{eq:second}
- \int_{a_0}^a f(x) \varphi_1^2(x) dx = \int_0^{a_0} f(x) \varphi_1^2(x) dx
\end{align}
by the fact that $f$ is symmetric and $\int_{-a}^a f(x) \varphi_1^2(x) dx = 0$. 
Observe that without loosing generality we may and do assume that $f \geq 0$ on $[0,a_0]$. Let $a^{*} \in [0,a_0)$ be such that $f(a^{*}) = \max_{x \in [0,a_0)} f(x)$. Note that $\int_0^a f^2(x) \varphi_1^2(x) dx = 1/2$. By \eqref{eq:first} and \eqref{eq:second}, we have
\begin{align*}
1/4 \geq \int_{a_0}^a f^2(x) \varphi_1^2(x) dx & \geq \frac{\left(\int_{a_0}^a f(x) \varphi_1^2(x) dx\right)^2}{\varphi_1^2(a_0)(a-a_0)} = \frac{\left(\int_0^{a_0} f(x) \varphi_1^2(x) dx\right)^2}{\varphi_1^2(a_0)(a-a_0)}\\
& = \frac{f^2(a^{*})\left(\int_0^{a_0} f(x) \varphi_1^2(x) dx\right)^2}{f^2(a^{*})\varphi_1^2(a_0)(a-a_0)} \geq \frac{\left(\int_0^{a_0} f^2(x) \varphi_1^2(x) dx\right)^2}{f^2(a^{*})\varphi_1^2(a_0)(a-a_0)},
\end{align*}
which implies that
\begin{align}
\label{eq:third}
f^2(a^{*}) \varphi_1^2(a_0) \geq 1/(4(a-a_0)).
\end{align}
We have 
\begin{align*}
\int_{-a}^a \int_{-a}^a \frac{(f(x)-f(y))^2}{|x-y|^{1+\alpha}} \varphi_1(x) \varphi_1(y) dx dy 
& \geq \int_0^{a_0} \int_0^{a_0}  \frac{(f(x)-f(y))^2}{|x-y|^{1+\alpha}} \varphi_1(x) \varphi_1(y) dx dy \\
& \ \ \ \ \ \ \ + \int_{-a_0}^0 \int_{-a_0}^0  \frac{(f(x)-f(y))^2}{|x-y|^{1+\alpha}} \varphi_1(x) \varphi_1(y) dx dy \\
& = 2 \int_0^{a_0} \int_0^{a_0}  \frac{(f(x)-f(y))^2}{|x-y|^{1+\alpha}} \varphi_1(x) \varphi_1(y) dx dy \\
& \geq 2 \varphi_1^2(a_0) \int_{a^{*}}^{a_0} \int_{a^{*}}^{a_0}  \frac{(f(x)-f(y))^2}{|x-y|^{1+\alpha}}.
\end{align*}
Now, using Theorem \ref{th:fpi} and \eqref{eq:third}, we obtain 
$$
\int_{-a}^a \int_{-a}^a \frac{(f(x)-f(y))^2}{|x-y|^{1+\alpha}} \varphi_1(x) \varphi_1(y) dx dy \geq 2 \varphi_1^2(a_0) \frac{C^{(4)}_{\alpha}}{(a_0-a^{*})^{\alpha-1}} f^2(a^{*}) \geq \frac{1}{2} \frac{C^{(4)}_{\alpha}}{(2a)^{\alpha}},
$$
which completes the proof.
\end{proof}
\smallskip
\noindent
\textbf{Acknowledgements.} I would like to thank Professor Tadeusz Kulczycki, my supervisor, for his help and guidance in investigating the theory and preparing this paper, which is part of my Ph.D. thesis.

\end{document}